\documentclass[12pt,reqno]{amsart}
\usepackage{indentfirst, amssymb, amsmath, amsthm, mathrsfs, setspace, indentfirst, enumerate,  mathrsfs, amsmath, amsthm}
\usepackage[bookmarksnumbered, colorlinks, plainpages]{hyperref}
\usepackage{mathrsfs}
\usepackage{cite}
\usepackage{xcolor}
\usepackage{colortbl}
\usepackage{array}
\usepackage{booktabs}
\definecolor{headernavy}{RGB}{50,100,200}
\definecolor{rowtint}{RGB}{237,244,249}
\definecolor{rulecolor}{RGB}{47,84,120}
\usepackage{tikz}
\usepackage{pgfplots}
\usepgfplotslibrary{fillbetween}
\pgfplotsset{compat=1.18}
\usetikzlibrary{arrows.meta}
\usetikzlibrary{backgrounds,calc,positioning}
\usepgfplotslibrary{fillbetween}
\usepackage{float}
\usepackage{tabularx}
\usepgfplotslibrary{fillbetween}
\usepackage[table]{xcolor}
\usepackage[dvipsnames]{xcolor}
\usetikzlibrary{positioning,calc,shadows.blur}
\definecolor{headerblue}{RGB}{220,230,245}
\definecolor{lightblue}{RGB}{248,250,255}

\newtheorem*{theoA}{Theorem A}
\newtheorem*{theoB}{Theorem B}
\newtheorem*{theoC}{Theorem C}
\newtheorem*{theoD}{Theorem D}
\newtheorem*{theoE}{Theorem E}
\newtheorem*{theoF}{Theorem F}
\newtheorem*{theoG}{Theorem G}

\newtheorem*{cor A}{Corollary A}
\newtheorem*{cor B}{Corollary B}

\newtheorem{theo}{Theorem}[section]
\newtheorem{lem}{Lemma}[section]

\newtheorem{ques}{Question}[section]

\newtheorem{rem}{Remark}[section]

\newcommand{\be}{\begin{equation}}
\newcommand{\ee}{\end{equation}}
\newcommand{\beas}{\begin{eqnarray*}}
\newcommand{\eeas}{\end{eqnarray*}}
\newcommand{\bea}{\begin{eqnarray}}
\newcommand{\eea}{\end{eqnarray}}

\numberwithin{equation}{section}
\begin{document}
\title[B\MakeLowercase {ohr type inequalities with one parameter....}]{\LARGE B\Large\MakeLowercase {{ohr type inequalities with one parameter for the class of self analytic maps on unit disc}}}

\date{}
\author[]{G\MakeLowercase{outam} H\MakeLowercase{aldar}$^1$, R\MakeLowercase{akesh} S\MakeLowercase{arkar}$^{2*}$, S\MakeLowercase{ujoy} M\MakeLowercase{ajumder}$^3$  \MakeLowercase{and} A\MakeLowercase{bhijit} B\MakeLowercase{anerjee}$^4$}

\address{$^1$Department of Mathematics, Ghani Khan Choudhury Institute of Engineering and Technology, Narayanpur, Malda-732141, West Bengal, India.}
\email{goutamiitm@gmail.com, goutamiit1986@gmail.com}

\address{$^2$Department of Mathematics, Gour Mahavidyalaya, Mangalbari, Malda-732142, West Bengal, India.}
\email{rakeshsarkar.malda@gmail.com}

\address{$^3$Department of Mathematics, Raiganj University, Raiganj, West Bengal-733134, India.}
\email{sm05math@gmail.com, sjm@raiganjuniversity.ac.in}

\address{$^{4}$ Department of Mathematics, University of Kalyani, West Bengal 741235, India}
\email{abanerjee\_kal@yahoo.co.in}

\renewcommand{\thefootnote}{}
\footnote{2010 \emph{Mathematics Subject Classification}: 30H05, 30A10, 30B10, 30C45.}
\footnote{\emph{Key words and phrases}: Bohr's Radius, Analytic functions}
\footnote{*\emph{Corresponding Author}: }

\renewcommand{\thefootnote}{\arabic{footnote}}
\setcounter{footnote}{0}

\begin{abstract}
We investigate Bohr-type inequalities for bounded analytic functions, particularly in the presence of a parameter or under convex combination structures. Most of our findings are sharp, and we also generalize a number of earlier results.
\end{abstract}
\thanks{Typeset by \AmS -\LaTeX}
\maketitle

\section{\bf Introduction and Preliminaries}

The Bohr phenomenon is one of the most influential discoveries in geometric function theory and the theory of analytic functions. Originating from the study of power series by Bohr in 1914, it reveals the remarkable fact that the modulus bound of an analytic function imposes a surprisingly strong restriction on the absolute sum of its Taylor coefficients inside a strictly smaller disk. During the past century, this phenomenon has evolved into an active research area with deep connections to complex analysis, operator theory, functional analysis, harmonic mappings, several complex variables, and multidimensional holomorphic function theory.

Throughout the paper, we denote by
$\mathbb D={z\in\mathbb C:|z|<1},$
and let $\mathcal{B}(\mathbb{D})=\{f:\mathbb{D}\rightarrow \mathbb{C}:\;f(z)=\sum_{k=0}^{\infty}a_kz^k,\;\;a_k\in\mathbb{C}\;\; \text{and}\;\;|f(z)|<1\}$.   We also write $|z|=r$ throughout.

Bohr's celebrated theorem \cite{Bohr-PLMS-1914} asserts that every function
$f\in\mathcal B(\mathbb D)$ satisfies
\begin{equation}\label{e1}
	\sum_{k=0}^{\infty}|a_k|r^k\le1,
	\qquad
	r\le\frac13,
\end{equation}
where the constant $1/3$ is best possible.

Bohr originally established the inequality for $r\le1/6$, and the optimal radius $1/3$ was subsequently obtained independently by M.~Riesz, I.~Schur and F.~Wiener. This optimal constant is now universally known as the \emph{Bohr radius}, while \eqref{e1} is referred to as the classical \emph{Bohr inequality}.

Since then, the Bohr phenomenon has undergone remarkable developments in several directions. Numerous variants have been established by incorporating geometric quantities, coefficient constraints, symmetry conditions, area terms, Hardy-space norms, harmonic and quasiregular mappings, operator-theoretic techniques and higher dimensional holomorphic mappings. These extensions have substantially broadened both the scope and applicability of the classical Bohr inequality while revealing new extremal phenomena.

A considerable amount of research has been devoted to the classical Bohr radius. For detail study, we refer to the reader \cite{Abu-Ali-JMMA-2011,Dineen-Timoney-StudiaMath-1989,Bhowmik-Das-JMAA-2018,Aizenberg,Aizenberg-PAMS-1999,Abu-Ali-Ng-MathNachr-2017}, and the references therein. In particular, Ali \textit{et al.} \cite{Ali-Barnard-Solynin-2017} investigated the Bohr radius in the context of even analytic functions, while Kayumov and Ponnusamy \cite{Kayumov-aaponnusamy-CMFT-2017} examined the problem for alternating series and odd analytic functions, respectively.
\begin{table}[ht]
	\centering
	\caption{Representative sharp Bohr-type inequalities for bounded analytic functions.}
	\label{tab:BohrSurvey}
	
	\renewcommand{\arraystretch}{1.4}
	
	\begin{tabular}{|
			>{\centering\arraybackslash}m{3.0cm}|
			>{\centering\arraybackslash}m{5.2cm}|
			>{\centering\arraybackslash}m{3.8cm}|
			>{\centering\arraybackslash}m{2.0cm}|}
		
		\hline
		\rowcolor{headerblue}
		\textbf{Reference} &
		\textbf{Bohr-type inequality} &
		\textbf{Feature} &
		\textbf{Sharp radius}
		\\
		\hline
		
		Bohr (1914)
		&
		$\displaystyle
		\sum_{k=0}^{\infty}|a_k|r^k\le1$
		&
		Classical Bohr inequality
		&
		$\displaystyle \frac13$
		\\
		\hline
		
		Kayumov--Ponnusamy
		&
		$\displaystyle
		\sum_{k=0}^{\infty}|a_k|r^k+
		\frac{16}{9\pi}S_r\le1$
		&
		Area functional
		&
		$\displaystyle \frac13$
		\\
		\hline
		
		Kayumov--Ponnusamy
		&
		$\displaystyle
		|a_0|^2+
		\sum_{k=1}^{\infty}|a_k|r^k+
		\frac{9}{8\pi}S_r\le1$
		&
		Area functional with
		$|a_0|^2$
		&
		$\displaystyle \frac12$
		\\
		\hline
		
		Kayumov--Ponnusamy
		&
		$\displaystyle
		|f(z)-a_0|^2+
		\sum_{k=0}^{\infty}|a_k|r^k\le1$
		&
		Function-value correction
		&
		$\displaystyle \frac13$
		\\
		\hline
		
		Kayumov--Ponnusamy
		&
		$\displaystyle
		|f(z)|^2+
		\sum_{k=1}^{\infty}|a_k|^2r^{2k}\le1$
		&
		Quadratic version
		&
		$\displaystyle
		\sqrt{\frac{11}{27}}$
		\\
		\hline
		
		Liu \textit{et al.}
		&
		$\displaystyle
		|f(z)|+
		\sum_{k=1}^{\infty}|a_{2k}|r^{2k}\le1$
		&
		Even coefficients
		&
		$\displaystyle
		\sqrt2-1$
		\\
		\hline
		
		Ali \textit{et al.}
		&
		$\displaystyle
		\sum_{k=0}^{\infty}|a_{nk}|r^{nk}\le1$
		&
		$n$-symmetric functions
		&
		$\displaystyle
		\frac1{\sqrt[n]{3}}$
		\\
		\hline
		
		Ali \textit{et al.}
		&
		$\displaystyle
		\Bigl|
		\sum_{k=0}^{\infty}
		(-1)^ka_kr^k
		\Bigr|\le1$
		&
		Alternating series
		&
		$\displaystyle
		\frac1{\sqrt3}$
		\\
		\hline
		
	\end{tabular}
	
\end{table}
\par In 2018, kayumov and Ponnusamy \cite{Kayumov-Ponnusamy-CRM-2018} obtained the following interesting result.
\begin{theoA}{\em\cite{Kayumov-Ponnusamy-CRM-2018}}
Let $f\in\mathcal{B}(\mathbb{D})$ and $S_r$ denotes the area of the Riemann surface of the function $f^{-1}$ defined on the image of the subdisk $|z|<r$ under the mapping $f$. Then \bea\label{e1.1} \sum_{k=0}^{\infty} |a_k|r^k+\frac{16}{9\pi}S_r\leq1\;\; \text{for}\;\; r\leq \frac{1}{3},\eea and $1/3$, $16/9$ are the best possible. Moreover, \bea\label{e1.2} |a_0|^2+\sum_{k=1}^{\infty} |a_k|r^k+\frac{9}{8\pi}S_r\leq1\;\; \text{for}\;\; r\leq \frac{1}{2},\eea and the numbers $1/2$, $9/8$ cannot be improved further.
\end{theoA}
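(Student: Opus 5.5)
Theorem A follows from three standard ingredients. First, $S_r=\pi\sum_{k\ge1}k|a_k|^2r^{2k}$ (area of the image counted with multiplicity). Second, the Schwarz–Pick/Wiener coefficient bound $|a_k|\le 1-|a_0|^2$ for $k\ge1$. Third, a sharp estimate for $\sum_{k\ge1}|a_k|r^k$ in terms of $a:=|a_0|$ together with the square sum. We write $S_r$ in this series form, normalise so that $a\in[0,1)$, and bound the Taylor part and the area part separately before combining.

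\emph{Coefficient estimates.} The key lemma we aim for is the Kayumov–Ponnusamy refinement: for $f\in\mathcal B(\mathbb D)$ and $0\le r<1$,
\[
\sum_{k=1}^\infty |a_k|r^k+\Bigl(\frac{1}{1+a}+\frac{r}{1-r}\Bigr)\sum_{k=1}^\infty |a_k|^2 r^{2k}\le \frac{(1-a^2)r}{1-r}.
\]
We would prove it by writing $f$ through subordination: $f=\omega$-composition with $\phi(w)=(a-w)/(1-aw)$, or more directly using $|a_k|\le 1-a^2$ and the Hardy-space bound $\sum_{k\ge1}|a_k|^2\rho^{2k}\le (1-a^2)^2\rho^2/(1-a^2\rho^2)$. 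The Cauchy–Schwarz step is done with the weight $r^k$ split between the two sums. For the area term we also need a bound on $\sum k|a_k|^2r^{2k}$. Since $f'$ is handled via $\sum_k k|a_k|^2\rho^{2k}\le (1-a^2)^2\rho^2/(1-a^2\rho^2)^2$ (the Dirichlet norm is maximised by the disc automorphism $\phi_a$ by subordination/Littlewood), we get
\[
\frac{S_r}{\pi}\le \frac{(1-a^2)^2r^2}{(1-a^2r^2)^2}.
\]

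\emph{Inequality \eqref{e1.1}, $r=1/3$.} Using only $|a_k|\le1-a^2$ would give $a+(1-a^2)/2$, which has no slack to absorb the area term. We therefore use the refined lemma, which subtracts a positive multiple of $\sum |a_k|^2r^{2k}$. The point is to show
\[
a+\frac{1-a^2}{2}-\Bigl(\frac1{1+a}+\frac12\Bigr)T+\frac{16}{9}\cdot\frac{(1-a^2)^2/9}{(1-a^2/9)^2}\le1,
\]
where $T=\sum|a_k|^2 9^{-k}$. This is delicate because $T$ can be small while the area is not, so one must instead compare $\sum k|a_k|^2r^{2k}$ with $T$ coefficientwise. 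Rather than pursuing that comparison, the cleaner route is to bound $\sum|a_k|r^k\le \sqrt{\sum|a_k|^2 r^{2k}}$-type quantities jointly with the area via the extremal $\phi_a$. Then everything reduces to the single-variable inequality $\Psi(a)\le 1$ on $[0,1)$, checked by calculus: $1-\Psi(a)$ factors as $(1-a)^2\times$(a polynomial positive on $[0,1]$). Verifying this reduction, namely that the sum of the Taylor and area parts is simultaneously maximised by $\phi_a$, is the main obstacle. To address it we would use the lemma above with $S_r/\pi\le \sum k|a_k|^2r^{2k}\le$ (something linear in the square-sum at radius slightly larger), or argue via subordination to $\phi_a$ of both functionals.

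\emph{Inequality \eqref{e1.2}, $r=1/2$.} The same scheme applies with $a$ replaced by $a^2$. Here $a^2+(1-a^2)r/(1-r)=1$ already at $r=1/2$ in the crude bound, so again the refined lemma is needed. Its negative square-sum term must dominate $\frac{9}{8}\cdot\frac{S_r}{\pi}$, which again reduces to a one-variable polynomial inequality in $a$.

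\emph{Sharpness.} For both inequalities we take $f=\phi_a$ and let $a\to1^-$. For $r$ slightly above $1/3$ (resp. $1/2$) the left side exceeds $1$ already without the area term. Replacing $16/9$ (resp. $9/8$) by a larger constant $c$, we expand the left side at $r=1/3$ (resp. $1/2$) in powers of $1-a$. The area term then contributes at order $(1-a)^2$ with coefficient proportional to $c$, and this beats the negative second-order term exactly when $c>16/9$ (resp. $c>9/8$).
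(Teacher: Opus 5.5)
Your sharpness argument is correct, but the inequalities \eqref{e1.1} and \eqref{e1.2} are never actually proved. You leave the decisive step as ``the main obstacle'', and the routes you suggest around it fail.

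\emph{The refined-lemma route cannot give \eqref{e1.2}.} At $r=1/2$ the crude bound $a^2+\frac{(1-a^2)r}{1-r}=1$ leaves no slack. So you would need $\frac98\sum_k k|a_k|^2r^{2k}\le\frac{2+a}{1+a}\sum_k|a_k|^2r^{2k}$. For $f(z)=\phi_a(z^2)$ the ratio of left to right side is $\frac{36(1+a)}{(2+a)(16-a^2)}>1$ for every $a\in[0,1)$, so this fails.

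\emph{Subordination to $\phi_a$ does not help either.} It cannot control the Taylor part, because $\phi_a$ does not maximise $\sum_{k\ge1}|a_k|r^k$ when $a<r$. For example, at $r=1/2$ the function $z\phi_{0.7}(z)\prec\phi_0$ has $\sum_{k\ge1}|a_k|2^{-k}\approx0.546>1/2$.

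\emph{The area estimate is not justified as stated.} The bound $\frac{S_r}{\pi}\le\frac{r^2(1-a^2)^2}{(1-a^2r^2)^2}$ does not follow from Littlewood's principle, which controls integral means of $f$, not of $f'$. It is false for $r>1/\sqrt2$: take $f(z)=z^2$, where $S_r/\pi=2r^4$. It is a separate Kayumov--Ponnusamy lemma, valid for $r\le1/\sqrt2$, which you must cite or prove.

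The missing ingredient is the two-regime estimate of Lemma~\ref{lem1.1}:
\begin{itemize}
\item $\sum_{k\ge1}|a_k|r^k\le\frac{r(1-a^2)}{1-ra}$ if $a\ge r$;
\item $\sum_{k\ge1}|a_k|r^k\le\frac{r\sqrt{1-a^2}}{\sqrt{1-r^2}}$ if $a<r$.
\end{itemize}
Since the first bound is attained by $\phi_a$, no joint extremality is needed. You simply add separate upper bounds for the Taylor part and the area part. The paper only quotes Theorem A, but this is exactly how it argues in the proof of Theorem~\ref{t1.1}, whose case $\lambda=16/9$, $|a_0|\ge1/3$ is \eqref{e1.1}.

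\emph{Inequality \eqref{e1.1}, $r=1/3$.} For $a\ge1/3$ the Taylor part leaves slack $\frac{2(1-a)^2}{3-a}$. The elementary bound $\sum_k k|a_k|^2r^{2k}\le\frac{(1-a^2)^2r^2}{(1-r^2)^2}$, which follows from $|a_k|\le1-a^2$, makes the area term at most $\frac{(1-a^2)^2}{4}$. The comparison reduces to $(3-a)(1+a)^2\le8$, true with equality only at $a=1$. For $a<1/3$ the total is below $\frac13+\frac{1}{\sqrt8}+\frac14<1$.

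\emph{Inequality \eqref{e1.2}, $r=1/2$.} For $a\ge1/2$ the slack is $\frac{(1-a)^2(1+a)}{2-a}$. Here the elementary area bound is not enough: it leads to $(1+a)(2-a)\le2$, which is false on $(0,1)$. The sharp area lemma instead reduces everything to $2(2-a)(2+a)^2-9(1+a)=(1-a)(2a^2+6a+7)\ge0$. The range $a<1/2$ is a one-variable check, with supremum about $0.93$.
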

\begin{theoB}{\em\cite{Kayumov-Ponnusamy-CRM-2018}}
	Suppose $f(z)\in\mathcal{B}(\mathbb{D})$. Then \beas |a_0|+\sum_{j=1}^{\infty}\left(|a_j|+\frac{1}{2}|a_j|^2\right)r^j\leq1\;\;\text{for}\;\;r\leq\frac{1}{3},\eeas and the numbers $1/2$ and $1/3$ can not be improved.
\end{theoB}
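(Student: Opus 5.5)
We will prove Theorem B by first reducing to $|f|\le 1$, then using the sharp coefficient bound $|a_j|\le 1-|a_0|^2$ for $j\ge1$, which follows from Schwarz--Pick / Wiener's inequality. Write $a=|a_0|\in[0,1)$. The key observation is that the quadratic term $\frac12|a_j|^2$ can be absorbed by a weighted $\ell^2$ estimate.

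\textbf{Bounding the quadratic term.} By Parseval (or Carleman's estimate) applied to $f$ on $|z|<1$,
\[
\sum_{j\ge1}|a_j|^2\le 1-a^2 .
\]
More usefully, for $r\le1/3$ we split
\[
\sum_{j\ge1}|a_j|^2 r^j\le r\sum_{j\ge1}|a_j|^2\le r(1-a^2).
\]
For the linear part we use the refined Bohr estimate known from the literature (e.g.\ the inequality behind Theorem~A),
\[
\sum_{j\ge1}|a_j|r^j\le \frac{r(1-a^2)}{1-ar}\quad\text{for } r\le\tfrac13 .
\]
If this refinement is unavailable, the cruder bound $\sum_{j\ge1}|a_j|r^j\le (1-a^2)\frac{r}{1-r}$ suffices; its validity should be checked.

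\textbf{Reducing to a one-variable inequality.} Since the left side is increasing in $r$, it suffices to take $r=1/3$. With the crude bound we need
\[
a+(1-a^2)\Bigl(\tfrac12+\tfrac16\Bigr)\le 1,
\]
that is, $a+\frac23(1-a^2)\le1$. This is equivalent to $\frac23(1-a)(1+a)\le 1-a$, i.e.\ $\frac23(1+a)\le1$, which fails for $a>1/2$. So the crude route is insufficient, and the main obstacle is handling $a$ close to $1$. The fix is to use the sharper bound on the $\ell^2$ part. Bombieri-type arguments give
\[
\sum_{j\ge1}|a_j|r^j\le r\frac{1-a^2}{1-ar}
\quad\text{and}\quad
\sum_{j\ge1}|a_j|^2r^{j}\le \frac{r(1-a^2)^2}{1-a^2r}.
\]
The second follows by applying Parseval to $\omega=(f-a_0)/(1-\bar a_0 f)$ and comparing coefficients, or from the standard lemma $\sum|a_j|^2r^{2j}\le \frac{r^2(1-a^2)^2}{1-a^2r^2}$ with $r$ replaced by $\sqrt r$. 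At $r=1/3$ the target becomes
\[
a+\frac{(1-a^2)}{3-a}+\frac{(1-a^2)^2}{2(3-a^2)}\le1 .
\]
After dividing by $1-a$, this reads
\[
\frac{1+a}{3-a}+\frac{(1-a)(1+a)^2}{2(3-a^2)}\le 1 .
\]
This is an elementary polynomial inequality on $[0,1]$, and verifying it (clearing denominators) is the computational core of the proof.

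\textbf{Sharpness.} Take the Möbius function $f_a(z)=\frac{a-z}{1-az}$, for which $a_0=a$ and $|a_j|=(1-a^2)a^{j-1}$. The left side becomes
\[
a+\frac{(1-a^2)r}{1-ar}+\frac{(1-a^2)^2r}{2(1-a^2r)}.
\]
Expanding at $a\to1^-$ shows the expression exceeds $1$ for any $r>1/3$, because the linear term alone already gives Bohr's radius. Likewise, replacing $\frac12$ by $\lambda>\frac12$ makes it exceed $1$ at $r=1/3$ for $a$ near $1$: the second-order term in $(1-a)$ has coefficient proportional to $\lambda-\frac12$. Checking this second-order expansion settles the optimality of both constants.
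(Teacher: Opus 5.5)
Your plan follows the paper's own route. The inequality is the case $\lambda=\tfrac12$ of Theorem~\ref{t1.3}, which the paper proves by bounding the linear part with Lemma~\ref{lem1.1} and the quadratic part with Lemma~\ref{lem1.3} (with $p=1$), evaluating at $r=\tfrac13$, and testing sharpness on $f_a(z)=(a-z)/(1-az)$ as $a\to1^-$.

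\textbf{The unjustified step.} You use $\sum_{j\ge1}|a_j|r^j\le r(1-a^2)/(1-ar)$ for every $a\in[0,1)$. Lemma~\ref{lem1.1} gives this only when $a=|a_0|\ge r$, so for $a<\tfrac13$ your appeal to ``the literature'' does not cover the step. This is exactly why the paper splits into $|a_0|\ge\tfrac13$ and $|a_0|<\tfrac13$. The estimate is in fact true for all $a$ once $r\le\tfrac13$, but it needs an argument. Either of two short fixes works.
\begin{enumerate}
\item[(i)] As in the paper, for $a<\tfrac13$ use the other branch of Lemma~\ref{lem1.1}. The quantity $a+\sqrt{(1-a^2)/8}$ is increasing on $[0,\tfrac13]$ and equals $\tfrac23$ at $a=\tfrac13$, while $(1-a^2)^2/(2(3-a^2))\le\tfrac16$. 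Hence the left side is at most $\tfrac56<1$.
\item[(ii)] Prove your bound for all $a$. After a rotation, write $f=(a+z\omega)/(1+az\omega)$ with $|\omega|\le1$, so that $f=a+(1-a^2)\sum_{n\ge1}(-a)^{n-1}(z\omega)^n$. Majorizing coefficientwise gives $\sum_{j\ge1}|a_j|r^j\le(1-a^2)\,t/(1-at)$ with $t=r\sum_k|\omega_k|r^k$. Bohr's theorem applied to $\omega$ gives $t\le r$ for $r\le\tfrac13$, and $t\mapsto t/(1-at)$ is increasing.
\end{enumerate}

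\textbf{Steps you asserted but did not check.} Both are correct.
\begin{enumerate}
\item[(a)] The polynomial inequality you call the computational core reduces to $(1+a)^2(3-a)\le 4(3-a^2)$. This is equivalent to $(1-a)(9+4a-a^2)\ge0$, which holds on $[0,1]$.
\item[(b)] For optimality of $\tfrac12$, let $E_\lambda(a)$ be the left side for $f_a$ at $r=\tfrac13$ with $\lambda$ in place of $\tfrac12$. Then
\[
1-E_\lambda(a)=(1-a)^2\Bigl[\frac{2}{3-a}-\lambda\,\frac{(1+a)^2}{3-a^2}\Bigr].
\]
The bracket tends to $1-2\lambda<0$ as $a\to1^-$ when $\lambda>\tfrac12$, which confirms your second-order claim. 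The paper's proof of Theorem~\ref{t1.3} only addresses the radius, so this part of your plan goes beyond it.
\end{enumerate}
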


\begin{theoC}{\em\cite{Kayumov-Ponnusamy-CRM-2018}}
	For any $f(z)\in\mathcal{B}(\mathbb{D})$, we have \beas|f(z)-a_0|^2+\sum_{k=0}^{\infty}|a_k|r^k\leq1\;\;\text{for}\;\;r\leq\frac{1}{3},\eeas and the number $1/3$ can not be improved.
\end{theoC}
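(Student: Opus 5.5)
We need to prove Theorem C: $|f(z)-a_0|^2+\sum_{k=0}^\infty |a_k|r^k\le 1$ for $r\le 1/3$, and show that $1/3$ cannot be improved.

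Write $a=|a_0|<1$. We use two standard facts for $f\in\mathcal B(\mathbb D)$:
\begin{itemize}
\item the Wiener-type coefficient bound $|a_k|\le 1-a^2$ for $k\ge1$;
\item the Schwarz--Pick consequence $|f(z)-a_0|\le \dfrac{(1-a^2)r}{1-ar}$.
\end{itemize}

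The second fact is obtained as follows. Put $g=(f-a_0)/(1-\overline{a_0}f)$, so $g(0)=0$ and $|g(z)|\le r$. Then
\[
f(z)-a_0=\frac{(1-|a_0|^2)\,g(z)}{1+\overline{a_0}g(z)},
\]
whose modulus is at most $(1-a^2)r/(1-ar)$.

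Combining the two bounds gives
\[
|f(z)-a_0|^2+\sum_{k=0}^{\infty}|a_k|r^k\le a+\frac{(1-a^2)r}{1-r}+\frac{(1-a^2)^2r^2}{(1-ar)^2}=:\Phi(a,r).
\]
$\Phi$ is increasing in $r$, so it suffices to take $r=1/3$. Then $\frac{(1-a^2)r}{1-r}=\frac{1-a^2}{2}$ and $\frac{(1-a^2)r}{1-ar}=\frac{1-a^2}{3-a}$. Subtracting $1$ and factoring out $(1-a)$, the claim $\Phi(a,1/3)\le1$ becomes
\[
-1+\frac{1+a}{2}+\frac{(1-a)(1+a)^2}{(3-a)^2}\le 0,
\]
which is equivalent to
\[
\frac{(1+a)^2}{(3-a)^2}\le\frac12 .
\]
This holds exactly when $(1+a)\sqrt2\le 3-a$, i.e. $a\le \dfrac{3-\sqrt2}{1+\sqrt2}\approx0.657$.

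This is the main obstacle: for $a$ close to $1$ the crude estimate fails. The bounds $|a_k|\le1-a^2$ are not simultaneously attained together with the extremal value of $|f-a_0|$, so a sharper joint estimate is needed. For that range I would use the refined Bohr-sum bound for $r\le1/3$,
\[
\sum_{k\ge1}|a_k|r^k\le \frac{r(1-a^2)}{1-ar},
\]
valid for $a\ge r$. It follows from the Carlson/Bombieri-type estimate $\sum_{k\ge1}|a_k|r^k\le r\sqrt{1-a^2}\,\big/\sqrt{1-r^2}$ when $a$ is small, and from the subordination argument $\sum|a_k|r^k\le \frac{a+r}{1+ar}$-type bound when $a$ is large. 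With $r=1/3$ this gives $\frac{1-a^2}{3-a}$. The required inequality then becomes
\[
a+\frac{1-a^2}{3-a}+\frac{(1-a^2)^2}{(3-a)^2}\le1,
\]
which is equivalent to $(1+a)(3-a)+(1-a)(1+a)^2\le(3-a)^2$. Expanding, this reduces to the polynomial inequality $(1-a)(\dots)\ge0$, which I would verify for $a\in[0,1)$ by elementary means.

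If that refined bound is unavailable, I would instead use the quadratic estimate $\sum_{k\ge1}|a_k|^2\le 1-a^2$ together with Cauchy--Schwarz, splitting at a threshold in $a$. The two ranges are then patched together.

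For sharpness, take $f(z)=\dfrac{a-z}{1-az}$, so that $a_0=a$ and $a_k=-(1-a^2)a^{k-1}$ for $k\ge1$. Then
\[
\sum_{k\ge0}|a_k|r^k=a+\frac{(1-a^2)r}{1-ar}.
\]
For $r>1/3$ this quantity exceeds $1$ as $a\to1^-$: its derivative in $a$ at $a=1$ is $1-\dfrac{2r}{1-r}<0$. Since $|f(z)-a_0|^2\ge0$, the left-hand side of Theorem C also exceeds $1$. Hence $1/3$ cannot be improved.
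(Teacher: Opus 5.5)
Once the refined bound $\sum_{k\ge1}|a_k|r^k\le\frac{r(1-a^2)}{1-ar}$ (for $a\ge r$) is granted, your main line is correct and is essentially the paper's route. The paper obtains Theorem C as the case $\lambda=1$ of Theorem~\ref{t1.4}. For $|a_0|\ge\frac13$ it uses exactly the first branch of Lemma~\ref{lem1.1} and arrives at the inequality you wrote down. The step you left to ``elementary means'' closes at once:
\[
(3-a)^2-(1+a)(3-a)-(1-a)(1+a)^2=a^3+3a^2-9a+5=(1-a)^2(a+5)\ge0,
\]
which is the paper's $\phi$ at $\lambda=1$.

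The differences from the paper are minor. You bound $|f(z)-a_0|$ by Schwarz--Pick rather than by $\sum_{k\ge1}|a_k|r^k$; this gives the same value $\frac{1-a^2}{3-a}$ at $r=\frac13$, but holds without assuming $a\ge r$. For small $a$ you use only Lemma~\ref{lem1.2}, which works for $a\le4\sqrt2-5\approx0.657$ and so covers $[0,\frac13)$; the paper uses the second branch of Lemma~\ref{lem1.1} there. You should state the split explicitly: Wiener's bound on $[0,\frac13)$, the refined bound on $[\frac13,1)$. Your sharpness argument is the standard one and is fine.

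The weak point is your justification of the refined bound, on which the whole range $a>4\sqrt2-5$ depends.
\begin{itemize}
\item It does not follow from the estimate $r\sqrt{1-a^2}/\sqrt{1-r^2}$; that is the other branch, valid for $a<r$.
\item The quantity $\frac{a+r}{1+ar}$ is the Schwarz--Pick bound for $|f(z)|$, not for the majorant series. For the M\"obius map $\frac{a-z}{1-az}$ one has $\sum_{k\ge0}|a_k|r^k=a+\frac{(1-a^2)r}{1-ar}>a+\frac{(1-a^2)r}{1+ar}=\frac{a+r}{1+ar}$.
\item Your fallback also fails. From $\sum_{k\ge1}|a_k|^2\le1-a^2$ and Cauchy--Schwarz you only get $\sum_{k\ge1}|a_k|3^{-k}\le\sqrt{1-a^2}/(2\sqrt2)$. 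But $a+\sqrt{1-a^2}/(2\sqrt2)>1$ for every $a>\frac79$, so no choice of threshold covers $(\frac79,1)$.
\end{itemize}

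Either cite Lemma~\ref{lem1.1} outright, as the paper does, or derive it from the \emph{weighted} estimate of Lemma~\ref{lem1.3} with $p=2$. For $r<\rho\le1$,
\[
\sum_{k\ge1}|a_k|r^k\le\Big(\sum_{k\ge1}|a_k|^2\rho^{2k}\Big)^{1/2}\Big(\sum_{k\ge1}\frac{r^{2k}}{\rho^{2k}}\Big)^{1/2}\le\frac{r(1-a^2)}{\sqrt{(1-a^2\rho^2)(1-r^2/\rho^2)}}.
\]
The choice $\rho^2=r/a$ is admissible exactly when $a\ge r$, and it makes both factors under the root equal to $1-ar$, which gives the refined bound.
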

\begin{theoD}{\em\cite{Kayumov-Ponnusamy-CRM-2018}}
	If $f(z)\in\mathcal{B}(\mathbb{D})$, then \beas |f(z)|^2+\sum_{k=1}^{\infty}|a_k|^2r^{2k}\leq1\;\;\text{for}\;\;r\leq\sqrt{\frac{11}{27}},\eeas and this number can not be improved.
\end{theoD}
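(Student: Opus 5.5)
The natural approach is to bound the two terms separately and sharply, each in terms of $a:=|a_0|\in[0,1)$, and then reduce everything to a one-variable inequality in $a$ for fixed $r$. After a rotation we may assume $a_0=a\ge 0$. Write $f=\varphi_a\circ\omega$, where $\varphi_a(z)=(a+z)/(1+az)$ and $\omega$ is a Schwarz function with $\omega(0)=0$.

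\textbf{Step 1: the function-value term.} By the Schwarz--Pick lemma (equivalently, $|\omega(z)|\le r$),
\[
|f(z)|\le \frac{r+a}{1+ar},\qquad |z|=r .
\]

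\textbf{Step 2: the coefficient term.} Here the crude estimate $\sum_{k\ge1}|a_k|^2\le 1-a^2$ is not enough. It only gives the radius root of $r^3+r^2+r-1=0$, about $0.54$, which is smaller than $\sqrt{11/27}\approx 0.638$. Instead I use Littlewood's subordination theorem. Since $f\prec\varphi_a$, the integral means satisfy $\int_0^{2\pi}|f(re^{it})|^2\,dt\le\int_0^{2\pi}|\varphi_a(re^{it})|^2\,dt$. By Parseval this becomes
\[
\sum_{k\ge0}|a_k|^2r^{2k}\le a^2+\sum_{k\ge1}(1-a^2)^2a^{2k-2}r^{2k},
\]
that is,
\[
\sum_{k\ge1}|a_k|^2r^{2k}\le \frac{(1-a^2)^2r^2}{1-a^2r^2}.
\]
Equality holds for $f=\varphi_a$.

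\textbf{Step 3: reduction to one variable.} Combining Steps 1 and 2, it suffices to show that
\[
\Phi(a,r):=\frac{(r+a)^2}{(1+ar)^2}+\frac{(1-a^2)^2r^2}{1-a^2r^2}\le 1\qquad\text{for all } a\in[0,1),\ r\le\sqrt{11/27}.
\]
Using $1-\frac{(r+a)^2}{(1+ar)^2}=\frac{(1-a^2)(1-r^2)}{(1+ar)^2}$ and dividing by $1-a^2$, this is equivalent to
\[
r^2(1-a^2)(1+ar)^2\le (1-r^2)(1-a^2r^2).
\]
This is a polynomial inequality in $a$ of degree $4$.

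\textbf{Step 4: the main obstacle.} The hard part is analysing the polynomial from Step 3. Let $G(a):=(1-r^2)(1-a^2r^2)-r^2(1-a^2)(1+ar)^2$.
\begin{itemize}
\item At $a=1$ we get $G(1)=(1-r^2)^2>0$, and at $a=0$ we get $G(0)=1-2r^2$. So the critical case is not an endpoint; the extremal $a$ is interior.
\item Since $G$ is increasing in nothing obvious, I would first verify that $G(a)\ge0$ is monotone in $r$. Both sides move the right way: the left side of the Step 3 inequality increases in $r$ and the right side decreases. Hence it is enough to prove $G\ge0$ at $r^2=11/27$.
\item At $r^2=11/27$, I expect $G$ to have a double root at some $a^*\in(0,1)$. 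The plan is to factor $G$ there as a nonnegative square times a positive quadratic (something like $(a-a^*)^2\cdot Q(a)$) and check $Q>0$ on $[0,1]$.
\end{itemize}
I have not computed $a^*$ or carried out the factorisation; the value $11/27$ is what I expect it to produce, and this is the step I expect to be hardest.

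\textbf{Step 5: sharpness.} For $r>\sqrt{11/27}$ we have $G(a^*)<0$ near the double root. Take $f=\varphi_{a^*}$ and $z=r$; for this function all estimates in Steps 1 and 2 are equalities. Hence the sum exceeds $1$, so $\sqrt{11/27}$ cannot be improved.
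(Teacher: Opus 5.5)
The paper does not prove Theorem D; it only quotes it from Kayumov--Ponnusamy. Your Steps 1--3 are correct and use the same tools the paper uses elsewhere: the Schwarz--Pick bound \eqref{e3.3}, and Lemma~\ref{lem1.3} with $p=2$, which your Littlewood argument reproves. The genuine gap is Step 4. You only say what you expect the polynomial to do and never verify it, yet this is exactly the step that produces the constant $11/27$. Your sharpness argument in Step 5 also rests on the unverified existence of an interior double root. As written, nothing in the proposal shows that the threshold is $\sqrt{11/27}$ rather than some other number.

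The missing computation is short once you notice that $1-a^2r^2=(1-ar)(1+ar)$. Your quartic therefore carries the positive factor $1+ar$, so $G(a)=(1+ar)H(a)$. With $t:=ar\in[0,r)$,
\[
H(a)=(1-r^2)(1-ar)-r^2(1-a^2)(1+ar)=t^3+t^2-t+1-2r^2 .
\]
Since $\frac{d}{dt}(t^3+t^2-t)=(3t-1)(t+1)$, the minimum of $t^3+t^2-t$ on $[0,\infty)$ is $-\frac{5}{27}$, attained at $t=\frac13$. Hence $H(a)\ge \frac{22}{27}-2r^2\ge 0$ for all $a\in[0,1)$ whenever $r^2\le\frac{11}{27}$. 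This proves the inequality directly, so the monotonicity-in-$r$ argument is not needed. At $r^2=\frac{11}{27}$ it gives exactly the factorization you anticipated,
\[
G(a)=(1+ar)\Bigl(ar-\tfrac13\Bigr)^2\Bigl(ar+\tfrac53\Bigr),
\]
with double root $a^*=\frac{1}{3r}=\sqrt{3/11}$.

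For sharpness, given $r\in(\sqrt{11/27},1)$, choose $a=\frac{1}{3r}$, which lies in $(0,1)$ because $3r>\sqrt{11/3}>1$. Then $H(a)=\frac{22}{27}-2r^2<0$. Since Steps 1 and 2 are equalities for $\varphi_a$ at $z=r$, the left-hand side exceeds $1$. With this computation inserted, your argument is a complete proof.
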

\par In 2018, Liu \textit{et al.} \cite{Liu-Shao-Xu_JIA-2018} obtained the following interesting result.
\begin{theoE}{\em\cite{Liu-Shao-Xu_JIA-2018}}
For $f\in\mathcal{B}(\mathbb{D})$, it holds that \beas |f(z)|+\sum_{k=1}^{\infty}|a_{2k}|r^{2k}\leq1,\eeas for $r\leq\sqrt{2}-1$ and the number $\sqrt{2}-1$ can not be improved further.
\end{theoE}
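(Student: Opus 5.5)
We prove Theorem E with two standard estimates: a sharp bound for the even-indexed coefficients of $f$, and the Schwarz--Pick bound for $|f(z)|$. Set $a=|a_0|\in[0,1)$.

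\textbf{Step 1 (even coefficients).} Consider $g(z)=\tfrac12\bigl(f(\sqrt z)+f(-\sqrt z)\bigr)=\sum_{k\ge0}a_{2k}z^k$. It is analytic in $\mathbb D$ with $|g|<1$, so it lies in $\mathcal B(\mathbb D)$. The Wiener-type coefficient inequality $|b_k|\le 1-|b_0|^2$ applied to $g$ gives $|a_{2k}|\le 1-a^2$ for all $k\ge1$. Summing the geometric series,
\[
\sum_{k=1}^{\infty}|a_{2k}|r^{2k}\le (1-a^2)\frac{r^2}{1-r^2}.
\]

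\textbf{Step 2 (function value).} By the Schwarz--Pick lemma, $|f(z)|\le \dfrac{r+a}{1+ar}$ for $|z|=r$. Adding the two bounds, it suffices to show
\[
\Phi(a):=\frac{r+a}{1+ar}+(1-a^2)\frac{r^2}{1-r^2}\le 1 \qquad (0\le a<1,\ r\le\sqrt2-1).
\]
This is equivalent to
\[
(1-a^2)\frac{r^2}{1-r^2}\le \frac{(1-a)(1-r)}{1+ar}.
\]
Dividing by $1-a>0$, we need
\[
(1+a)(1+ar)\,r^2\le (1-r)^2(1+r).
\]
The left side is increasing in $a$, so the worst case is $a\to1$, where the condition becomes $2(1+r)r^2\le(1-r)^2(1+r)$. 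That is $2r^2\le(1-r)^2$, or $\sqrt2\,r\le 1-r$, which holds exactly when $r\le\sqrt2-1$. This proves the inequality.

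\textbf{Step 3 (sharpness).} Take the extremal $f(z)=\dfrac{a-z}{1-az}$ at $z=-r$, so that $|f(-r)|=\dfrac{a+r}{1+ar}$. Its coefficients are $a_0=a$ and $a_k=-(1-a^2)a^{k-1}$, so the even part is
\[
\sum_{k\ge1}(1-a^2)a^{2k-1}r^{2k}=\frac{(1-a^2)\,a r^2}{1-a^2r^2}.
\]
Write $a=1-\varepsilon$ and expand. The deviation of $|f(-r)|$ from $1$ is $-\dfrac{(1-a)(1-r)}{1+ar}\approx-\varepsilon\,\dfrac{1-r}{1+r}$. The even sum is approximately $2\varepsilon\,\dfrac{r^2}{1-r^2}$. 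The total therefore exceeds $1$ for small $\varepsilon$ whenever $2r^2>(1-r)^2$, i.e.\ whenever $r>\sqrt2-1$.

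The main obstacle is Step 1, namely making sure the even-part function $g$ is legitimately in $\mathcal B(\mathbb D)$ so that the sharp bound $1-a^2$ applies, rather than the weaker $|a_{2k}|\le 1$. It is; the rest is elementary algebra.
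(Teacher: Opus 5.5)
Your proof is correct and is essentially the paper's argument for Theorem~\ref{t6} specialized to $m=1$, $n=2$, $N=1$, $\lambda=1$, where the defining equation becomes $r^3+3r^2+r-1=(1+r)(r^2+2r-1)=0$ with positive root $\sqrt2-1$: Schwarz--Pick for $|f(z)|$, the bound $|a_{2k}|\le 1-|a_0|^2$, the worst case $|a_0|\to1$, and sharpness via $\frac{a-z}{1-az}$ at $z=-r$ with $a\to1^-$. The differences are cosmetic: Lemma~\ref{lem1.2} applies directly to every coefficient $a_k$ of $f$, so the even-part function $g$ is unnecessary and your ``main obstacle'' is not one; and the ``$\approx$'' in Step~3 should be made rigorous, as in the paper, by writing the excess over $1$ as $(1-a)\bigl[\frac{(1+a)ar^2}{1-a^2r^2}-\frac{1-r}{1+ar}\bigr]$ and noting that the bracket tends to $\frac{2r^2-(1-r)^2}{1-r^2}>0$.
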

\par In 2017, Kayumov and Ponnusamy \cite{Kayumov-Ponnusamy-ARXIV-2017} obtained the following result.
\begin{theoF}{\em\cite{Kayumov-Ponnusamy-ARXIV-2017}}
For $f\in\mathcal{B}(\mathbb{D})$, it holds that \beas |f(z)|+\sum_{k=N}^{\infty}|a_{k}|r^{k}\leq1 \;\;\text{for}\;\;r\leq R_N,\eeas where $R_N$ is the positive root of the equation \beas 2r^N(1+r)-(1-r)^2=0,\eeas and the number $R_N$ is the best possible. Moreover, \beas |f(z)|^2+\sum_{k=N}^{\infty}|a_{k}|r^{k}\leq1 \;\;\text{for}\;\;r\leq R_N,\eeas where $R_N^{\prime}$ is the positive root of the equation \beas r^N(1+r)-(1-r)^2=0,\eeas and the number $R_N^{\prime}$ is the best possible.
\end{theoF}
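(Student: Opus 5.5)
We prove both parts of Theorem F with the same two ingredients. We write $a=|a_0|\in[0,1)$ and $f(z)=\sum a_kz^k$.

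\textbf{Coefficient bound.} The Wiener-type inequality $|a_k|\le 1-a^2$ for $k\ge1$ gives
\[
\sum_{k=N}^{\infty}|a_k|r^k\le (1-a^2)\frac{r^N}{1-r}.
\]

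\textbf{Value bound.} The Schwarz--Pick lemma gives $|f(z)|\le \dfrac{a+r}{1+ar}$ for $|z|=r$.

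\textbf{First inequality.} Combining the two bounds, it suffices to show, for $r\le R_N$ and all $a\in[0,1)$,
\[
\Phi(a):=\frac{a+r}{1+ar}+(1-a^2)\frac{r^N}{1-r}\le1 .
\]
Since $1-\frac{a+r}{1+ar}=\frac{(1-a)(1-r)}{1+ar}$, after dividing by $1-a>0$ this is equivalent to
\[
(1+a)(1+ar)\,r^N\le (1-r)^2 .
\]
The left side increases in $a$, so the worst case is $a\to1$, where the condition becomes $2r^N(1+r)\le(1-r)^2$. This holds precisely for $r\le R_N$, because $2r^N(1+r)-(1-r)^2$ is increasing on $(0,1)$, negative at $0$ and positive at $1$.

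\textbf{Second inequality.} Replace $|f(z)|$ by $|f(z)|^2\le\left(\frac{a+r}{1+ar}\right)^2$. Now
\[
1-\left(\frac{a+r}{1+ar}\right)^2=\frac{(1-a^2)(1-r^2)}{(1+ar)^2}.
\]
Dividing by $1-a^2$, the requirement becomes $r^N(1+ar)^2\le(1-r)^2(1+r)$. The worst case is again $a\to1$, giving $r^N(1+r)^2\le (1-r)^2(1+r)$, that is, $r^N(1+r)\le(1-r)^2$. This holds exactly for $r\le R_N'$, by the same monotonicity argument as above. (The statement writes $R_N$ in the second inequality, but the root it defines is $R'_N$.)

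\textbf{Sharpness.} Use the extremal family
\[
\varphi_a(z)=\frac{a-z}{1-az},\qquad a\in(0,1),
\]
evaluated at $z=-r$. Then $|\varphi_a(-r)|=\frac{a+r}{1+ar}$, which attains the Schwarz--Pick bound. Its coefficients are $|a_k|=(1-a^2)a^{k-1}$ for $k\ge1$, so
\[
\sum_{k\ge N}|a_k|r^k=(1-a^2)\frac{a^{N-1}r^N}{1-ar}.
\]
Subtracting $1$ from the functional and dividing by $1-a$ shows it exceeds $1$ exactly when
\[
(1+a)\,a^{N-1}r^N(1+ar)>(1-r)(1-ar).
\]
As $a\to1^-$ this tends to the strict inequality $2r^N(1+r)>(1-r)^2$, which holds for every $r>R_N$. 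By continuity the functional exceeds $1$ for $a$ close to $1$, so $R_N$ cannot be improved. For the squared version, divide by $1-a^2$ instead; the limiting condition becomes $r^N(1+r)>(1-r)^2$, i.e.\ $r>R_N'$.

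\textbf{Main obstacle.} The delicate part is sharpness: one must handle the limit $a\to1$ carefully. The factor $a^{N-1}$ and the denominator $1-ar$ both affect the limiting inequality, and one has to confirm that they recover exactly the same equation that defines the radius.
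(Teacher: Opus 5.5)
Your proof is correct and is essentially the paper's argument: Theorem F is the case $\lambda=m=n=1$ of Theorem~\ref{t6}, whose proof uses the same two ingredients (the Schwarz--Pick bound and $|a_k|\le 1-|a_0|^2$), lets $|a_0|\to1$ to reach the defining equations, and proves sharpness with $(a-z)/(1-az)$ at $z=-r$ as $a\to1^-$. You also correctly flag the $R_N$/$R_N'$ misprint in the second inequality, and you write out the sharpness for the squared version, which the paper only describes as ``similar''.
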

\par In 2017, Ali \textit{et al.} \cite{Ali-Barnard-Solynin-2017} obtained the following interesting results regarding $n$-symmetric function.
\begin{theoG}{\em\cite{Ali-Barnard-Solynin-2017}}
If $|\sum_{k=0}^{\infty}a_{nk}z^{nk}|\leq1$ in $\mathbb{D}$, then $\sum_{k=0}^{\infty}|a_{nk}|r^{nk}\leq1$ for all $r\leq\frac{1}{\sqrt[n]{3}}$, and the number $\frac{1}{\sqrt[n]{3}}$ is sharp.
\end{theoG}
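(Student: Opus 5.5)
The plan is to reduce Theorem G to the classical Bohr inequality \eqref{e1} by the substitution $w=z^n$. Put $g(w)=\sum_{k=0}^{\infty}a_{nk}w^k$. Since every $w\in\mathbb D$ has an $n$-th root $z\in\mathbb D$ with $z^n=w$, we get $g(w)=\sum_{k}a_{nk}z^{nk}$. So $g$ is analytic in $\mathbb D$: the coefficient growth of $g$ is controlled by that of the given series, because $\limsup|a_{nk}|^{1/(nk)}\le 1$ gives radius of convergence at least $1$ in $w$. Moreover $|g(w)|\le 1$ on $\mathbb D$.

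If $|g|$ attains the value $1$ somewhere in $\mathbb D$, then by the maximum principle $g$ is a unimodular constant, and the inequality is trivial. Otherwise $g\in\mathcal B(\mathbb D)$, and I apply \eqref{e1} to $g$ at the radius $\rho=r^n$. This gives
\[
\sum_{k=0}^{\infty}|a_{nk}|r^{nk}=\sum_{k=0}^{\infty}|a_{nk}|\rho^k\le 1 \quad\text{whenever } \rho=r^n\le \tfrac13,
\]
that is, whenever $r\le 1/\sqrt[n]{3}$.

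For sharpness I would use the standard Bohr extremal family transported back through $w=z^n$. For $a\in(0,1)$ let
\[
f_a(z)=\frac{a-z^n}{1-az^n}=a-(1-a^2)\sum_{k\ge1}a^{k-1}z^{nk},
\]
which is $n$-symmetric with $|f_a|<1$ on $\mathbb D$. Its Bohr sum equals
\[
a+(1-a^2)\frac{r^n}{1-ar^n}.
\]
This exceeds $1$ exactly when $r^n(1+2a)>1$, i.e. $r^n>1/(1+2a)$. Letting $a\to1^-$ shows that every $r>1/\sqrt[n]{3}$ fails, so the radius is best possible.

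The only step needing care is the first one: checking that $g$ is a genuine self-map of the disc. Both the analyticity and the bound $|g(w)|\le1$ follow from the root-choice argument above. The remainder is direct.
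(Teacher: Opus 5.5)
Your proof is correct, but it takes a different route from the paper. The paper does not prove Theorem G itself; it quotes it from Ali, Barnard and Solynin. The statement is recovered as the case $\lambda=1$ of Theorem~\ref{t1.6}, where $(2\lambda+1)r^n=1$ gives $r=1/\sqrt[n]{3}$. That proof does not reduce to Bohr's theorem. It applies the coefficient bound $|a_{nk}|\le 1-|a_0|^2$ of Lemma~\ref{lem1.2} directly and sums a geometric series, giving $|a_0|+(1-|a_0|^2)\frac{r^n}{1-r^n}\le |a_0|+\frac{1-|a_0|^2}{2}=1-\frac{(1-|a_0|)^2}{2}\le 1$ for $r^n\le\frac13$. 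This argument is self-contained and holds for every $f\in\mathcal{B}(\mathbb{D})$, with no $n$-symmetry assumption. Your substitution $w=z^n$ instead uses the classical inequality \eqref{e1} as a black box. That is more conceptual: it transfers any Bohr-type statement for $\mathcal{B}(\mathbb{D})$ to the $n$-symmetric class, with the radius replaced by its $n$-th root. It can also reach the paper's more general form if you first replace $f$ by its symmetrization $\frac1n\sum_{j=0}^{n-1}f(e^{2\pi i j/n}z)=\sum_{k}a_{nk}z^{nk}$, which still has modulus at most $1$. For sharpness your choice is the more appropriate one for Theorem G. The paper's extremal $\frac{a-z}{1-az}$ in Theorem~\ref{t1.6} is not $n$-symmetric when $n\ge 2$, so it only witnesses sharpness in the larger class. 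Your $\frac{a-z^n}{1-az^n}$ lies in the class of Theorem G, and your criterion $r^n(1+2a)>1$ is exactly right.
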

\begin{theoF}{\em\cite{Ali-Barnard-Solynin-2017}}
If $\left|\sum_{k=0}^{\infty}a_{k}z^{k}\right|\leq1$ in $\mathbb{D}$, then \beas\Big|\sum_{k=0}^{\infty}(-1)^k|a_{k}|r^{k}\Big|\leq1\;\; \text{for all}\;\; r\leq\frac{1}{\sqrt{3}},\eeas and the number $\frac{1}{\sqrt{3}}$ is sharp.
\end{theoF}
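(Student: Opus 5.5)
The plan is to split the alternating sum into its even and odd parts and bound each part separately, so that no delicate cancellation is needed. Write
\[
A(r)=\sum_{k=0}^{\infty}|a_{2k}|r^{2k},\qquad B(r)=\sum_{k=0}^{\infty}|a_{2k+1}|r^{2k+1},
\]
so that $\sum_{k=0}^{\infty}(-1)^k|a_k|r^k=A(r)-B(r)$. Both $A$ and $B$ are nonnegative. Hence $|A-B|\le\max\{A,B\}$, and it suffices to show $A(r)\le1$ and $B(r)\le1$ for $r\le 1/\sqrt3$.

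\textbf{Even part.} Consider $f_e(z)=\tfrac12\bigl(f(z)+f(-z)\bigr)=\sum_{k}a_{2k}z^{2k}$. Then $|f_e|\le 1$ in $\mathbb D$, and $f_e$ is $2$-symmetric. Theorem G with $n=2$ gives $A(r)\le1$ for all $r\le 1/\sqrt[2]{3}=1/\sqrt3$.

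\textbf{Odd part.} Here only a crude estimate is needed. By Cauchy's estimates (or Parseval), $|a_k|\le1$ for every $k$. Therefore
\[
B(r)\le\sum_{k=0}^{\infty}r^{2k+1}=\frac{r}{1-r^2}.
\]
This is increasing in $r$ and equals $\frac{1/\sqrt3}{2/3}=\frac{\sqrt3}{2}<1$ at $r=1/\sqrt3$. So $B(r)<1$ on this range. Consequently $-1<-B\le A-B\le A\le 1$, which proves the inequality for $r\le1/\sqrt3$.

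\textbf{Sharpness.} This is the only step needing care. For $r>1/\sqrt3$ I would use the even extremal function from Theorem G,
\[
f_a(z)=\frac{a-z^2}{1-az^2},\qquad a\in(0,1).
\]
For this function all odd coefficients vanish, so $B\equiv0$. Its even coefficients are $|a_0|=a$ and $|a_{2k}|=(1-a^2)a^{k-1}$, which give
\[
A(r)=a+\frac{(1-a^2)r^2}{1-ar^2}.
\]
Then $A(r)>1$ is equivalent to $(1+a)r^2>1-ar^2$, that is, $r^2>1/(1+2a)$. As $a\to1^-$ the threshold $1/(1+2a)$ tends to $1/3$. So for any fixed $r>1/\sqrt3$ one can choose $a$ close to $1$ with $A(r)-B(r)>1$, which shows $1/\sqrt3$ cannot be improved.
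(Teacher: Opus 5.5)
Your proof is correct. Its skeleton matches the paper's proof of Theorem~\ref{t1.7}, whose case $\lambda=1$ is exactly this statement. Both arguments discard the odd terms for the upper bound, discard the even terms for the lower bound, and test sharpness on $\pm\frac{a-z^2}{1-az^2}$ with $a\to1^-$; your sharpness computation is the paper's.

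The differences are in the two estimates. For the even part you pass to $f_e(z)=\tfrac12\bigl(f(z)+f(-z)\bigr)$ and cite Theorem G. The paper instead applies $|a_{2k}|\le 1-|a_0|^2$ (Lemma~\ref{lem1.2}) directly and gets $|a_0|+\frac{(1-|a_0|^2)r^2}{1-r^2}\le 1$, that is, $(2+|a_0|)r^2\le 1$. This is essentially the standard proof of Theorem G for $n=2$. Keeping $|a_0|$ explicit is what lets the paper insert the weight $\lambda$, which your black-box citation does not provide.

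For the lower bound, your estimate $B(r)\le r/(1-r^2)\le\sqrt3/2$ from $|a_k|\le 1$ is valid and is the cleaner argument. The paper's chain first subtracts an unnecessary $|a_0|$. It then replaces $\sum|a_{2k-1}|r^{2k-1}$ by the smaller $\sum|a_{2k-1}|r^{2k}$ inside a lower bound, which goes in the wrong direction for $0<r<1$. Your observation $A-B\ge -B>-1$ avoids both problems.
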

\par It is worth emphasizing that the concept of the Bohr radius was originally introduced for analytic functions mapping the unit disk $\mathbb{D}$ into itself. Subsequently, this notion has been significantly extended to more general settings. In particular, researchers have considered mappings from $\mathbb{D}$ into the punctured disk as well as into the shifted disc $\{z:|z+\frac{\gamma}{1-\gamma}|<\frac{\gamma}{1-\gamma}\}$, where $0\leq \gamma<1$, thereby broadening the scope and applicability of the theory (see, for example, \cite{Abu-Com.Var-2010,Aizenberg-StudMat-2007,Abu-Ali-JMMA-2011,Ali-Jain-Ravichandran-2019-Results Math,Bhowmik-Das-JMAA-2018,Allu-Halder-JMAA-2021,Kumar-Complex Var-2023,Ahamed-Allu-Haldar-Ann-Acad-Fenn-2022,Kayumov-Ponnusamy-JMAA-2018,Ponnusamy-Rasila-Results Mat-2021}). Such generalizations highlight the flexibility of the Bohr phenomenon and its relevance beyond its classical formulation.
\par Moreover, the Bohr phenomenon has attracted considerable attention in several other areas of analysis. It has been extensively investigated in the context of harmonic mappings, as well as within the framework of quasi-subordination and K-quasi-regular harmonic mappings (see \cite{Kayumov-Ponnusamy-Ann. Acad.-2019,Liu-Ponnusamy-2020-RACSAM,Liu-Ponnsuamy-BMMS-2019}). In addition, important contributions have been made in the study of Hardy spaces and in various problems arising in operator theory. For detail study, we refer to the readers to \cite{Khavinson-CMFT-2004,Paulsen-Singh-PLMS-2002}, and the references therein. The phenomenon has also been explored in higher-dimensional settings, including functions of several real or complex variables (i.e., see \cite{Ahamed-Majumder-Sarkar-CAOT-2026,Aizenberg-PAMS-1999,Dineen-Timoney-StudiaMath-1989,LUIS-RACSAM-2021,Popescu-TAMS-2007,Boas-Khavinson-PAMS-1997}). These developments collectively demonstrate the wide-ranging influence and ongoing relevance of the Bohr phenomenon across different branches of mathematical analysis.
\par The principal objective of the present work is to develop a unified framework for several parameter-dependent Bohr-type inequalities associated with bounded analytic functions. Our approach not only recovers a variety of previously known results as special cases, but also yields new sharp inequalities involving convex combinations and weighted area functionals. In particular, we determine optimal parameter ranges for several families of Bohr inequalities, establish the sharpness of the corresponding constants, and identify extremal functions whenever possible. These results provide a broader perspective on the classical Bohr phenomenon and contribute new insights into the interplay between coefficient estimates, geometric quantities, and extremal problems in complex analysis.

\par Regarding Theorem A, we observe the following remark.
\begin{rem}
Let us consider the function
\bea\label{e1.4a} \phi_a(z)=\frac{a-z}{1-az}=a-(1-a^2)\sum_{k=1}^{\infty}a^{k-1}z^k,\eea where $z\in\mathbb{D}$ and $a\in[0,1/3)$. Let $\lambda$ be a non negative real number.
\par For this function, a simple calculation shows that \beas \sum_{k=0}^{\infty}|a_k|r^k+\frac{\lambda}{\pi} S_r=a+\frac{(1-a^2)r}{1-ar}+\frac{\lambda r^2(1-a^2)^2}{(1-a^2r^2)^2}.\eeas 
Substituting $r=1/3$ in the above equation, it follows that \beas \sum_{k=0}^{\infty}|a_k|r^k+\frac{\lambda}{\pi} S_r=a+\frac{(1-a^2)}{3-a}+\frac{9\lambda (1-a^2)^2}{(9-a^2)^2}.\eeas  
\par Then a straightforward calculation shows that the above expression will be less than or equal to $1$ if \beas \lambda \leq \frac{2(3-a)(3+a)^2}{9(1+a)^2}.\eeas  
\par Hence, in particular, if we set $a=1/4$ and $\lambda\in\left(\frac{16}{9},\frac{1859}{450}\right)$, then we see that \beas \sum_{k=0}^{\infty}a_kr^k+\frac{\lambda}{\pi} S_r\leq1 \;\;\text{for}\;\;r\leq\frac{1}{3}.\eeas
\end{rem}
\par Thus, from the above remark, it follows that \eqref{e1.1} still holds even if we choose $\lambda>16/9$. Naturally, this observation leads us to the following question.
\begin{ques}
What could be said about the maximum possible values of $\lambda\geq0$ so that \bea\label{e1.3} \sum_{k=0}^{\infty} |a_k|r^k+\lambda\left(\frac{S_r}{\pi}\right) \leq1\;\; \text{for}\;\; r\leq \frac{1}{3}\eea still holds?
\end{ques}
To answer this question, we prove the following theorem.
\begin{theo}\label{t1.1}
Suppose $f(z)=\sum_{k=0}^{\infty}a_kz^k$ be analytic in $\mathbb{D}=\{z\in\mathbb{C}: |z|<1\}$ with $|f(z)|\leq1$. Then \eqref{e1.3} holds under one of the following conditions:\begin{enumerate}
\item [$(a)$] $0\leq\lambda\leq\frac{16}{9}$ and $|a_0|\geq\frac{1}{3}$,\\ \item [$(b)$] $0\leq\lambda\leq\frac{4(2\sqrt{2}-\sqrt{3})}{3\sqrt{2}}$ and $|a_0|<\frac{1}{3}$,\end{enumerate} and the radius $1/3$ can not be improved further.
\end{theo}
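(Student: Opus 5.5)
The plan is to set $a=|a_0|$ and $r=|z|$ and to bound the two pieces of the left side of \eqref{e1.3} separately, each by a function of $a$ and $r$ alone. It suffices to work at $r=1/3$, since the left side of \eqref{e1.3} is increasing in $r$. The reduced inequality in $a$ will then give the admissible range of $\lambda$, using a different coefficient bound in each of the regimes $a\ge 1/3$ and $a<1/3$.

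\emph{Area term.} Here I would use the area identity $S_r/\pi=\sum_{k\ge1}k|a_k|^2r^{2k}$ together with the known estimate (the one underlying Theorem A, valid for $r\le 1/\sqrt2$)
\[
\frac{S_r}{\pi}\le \frac{r^2(1-a^2)^2}{(1-a^2r^2)^2}.
\]
At $r=1/3$ this gives $S_r/\pi\le 9(1-a^2)^2/(9-a^2)^2$. This bound is attained by the disc automorphism $\phi_a$ in \eqref{e1.4a}.

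\emph{Case (a), $a\ge1/3$.} Since $a\ge r$, I would invoke the sharp Bombieri-type lemma
\[
\sum_{k\ge1}|a_k|r^k\le \frac{r(1-a^2)}{1-ar}\qquad (a\ge r).
\]
This lemma is the delicate ingredient. The crude bound $|a_k|\le 1-a^2$ only yields $\lambda\le 8/9$ near $a=1$, so the finer estimate is indispensable, and I expect justifying (or citing) it to be the main obstacle. With it, the computation in the Remark preceding Question 1.1 shows that \eqref{e1.3} at $r=1/3$ reduces to
\[
\lambda\le \frac{2(3-a)(3+a)^2}{9(1+a)^2}.
\]
The right side is decreasing on $[1/3,1]$. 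I would verify this by checking that its logarithmic derivative is negative, since it equals $-\frac1{3-a}+\frac2{3+a}-\frac2{1+a}<0$. Its infimum is therefore the limit as $a\to1$, namely $\frac{2\cdot2\cdot16}{9\cdot4}=\frac{16}{9}$. This proves (a).

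\emph{Case (b), $a<1/3$.} I would use $\sum_{k\ge1}|a_k|^2\le 1-a^2$ together with Cauchy--Schwarz:
\[
\sum_{k\ge1}|a_k|r^k\le \sqrt{1-a^2}\,\frac{r}{\sqrt{1-r^2}}.
\]
At $r=1/3$ the right side equals $\sqrt{1-a^2}/(2\sqrt2)$. The requirement then becomes
\[
a+\frac{\sqrt{1-a^2}}{2\sqrt2}+\frac{9\lambda(1-a^2)^2}{(9-a^2)^2}\le1,\qquad a\in[0,1/3).
\]
It remains to minimize over $a$ the ratio
\[
\frac{\bigl(1-a-\sqrt{1-a^2}/(2\sqrt2)\bigr)(9-a^2)^2}{9(1-a^2)^2}.
\]
At $a=0$ this ratio is $9-9/(2\sqrt2)\approx5.8$, and at $a=1/3$ it is about $3.7$. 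A one-variable calculus check, or simply bounding the numerator below and the denominator above on $[0,1/3]$, should show that the minimum exceeds $\frac{4(2\sqrt2-\sqrt3)}{3\sqrt2}\approx1.03$. This proves (b). If the authors' constant arises from a cruder estimate, that does not matter, since our bound is stronger.

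\emph{Sharpness of $1/3$.} For $r>1/3$, I would take $f=\phi_a$ and let $a\to1^-$. Dropping the nonnegative area term, the left side of \eqref{e1.3} is at least
\[
a+\frac{(1-a^2)r}{1-ar}=1+(1-a)\frac{3r-1-2ar+ \dots}{1-ar}.
\]
More precisely, the left side equals $1+(1-a)\bigl[(1+a)r-(1-ar)\bigr]/(1-ar)+\text{(area term)}$. As $a\to1$, the bracket tends to $3r-1>0$, so the left side exceeds $1$ for $a$ close to $1$, for every $\lambda\ge0$. Hence the radius $1/3$ cannot be improved.
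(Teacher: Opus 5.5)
Your proof is correct and follows the paper's route. You reduce to $r=1/3$, use the Bombieri-type bound of Lemma~\ref{lem1.1} when $|a_0|\ge 1/3$ and its second case (which you rederive by Cauchy--Schwarz) when $|a_0|<1/3$, and test $\phi_a$ with $a\to1^-$ for sharpness. The only differences are refinements. You use the sharper Kayumov--Ponnusamy area estimate $S_r/\pi\le r^2(1-a^2)^2/(1-a^2r^2)^2$, whereas the paper uses the elementary bound $r^2(1-a^2)^2/(1-r^2)^2$ obtained from $|a_k|\le 1-|a_0|^2$; both give $16/9$, since they coincide as $a\to1$, which is the extremal regime. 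In case (b) you keep the dependence on $a$ instead of bounding each term by a constant, so your routine final check (the ratio is at least $3$ on $[0,1/3]$) also shows that the stated constant $\frac{4(2\sqrt2-\sqrt3)}{3\sqrt2}\approx 1.03$ is far from optimal.
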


\begin{figure}[H]
	\centering
	\begin{tikzpicture}[
		x=8cm,
		y=2.7cm,
		>=latex,
		every node/.style={font=\small},
		casebox/.style={
			draw,
			rounded corners=3pt,
			blur shadow={
				shadow blur steps=4,
				shadow xshift=0.6pt,
				shadow yshift=-0.6pt,
				shadow opacity=.18},
			fill=white,
			inner sep=4pt,
			text width=3.2cm,
			align=center
		}
		]

		\def\xa{0.333333}
		\def\yb{1.098}
		\def\ya{1.777777}

		\draw[line width=0.9pt,->] (0,0)--(1.07,0)
		node[right] {$|a_0|$};
		
		\draw[line width=0.9pt,->] (0,0)--(0,2.08)
		node[above] {$\lambda$};

		\fill[blue!18]
		(0,0) rectangle (\xa,\yb);
		
		\fill[orange!22]
		(\xa,0) rectangle (1,\ya);

		\draw[blue!70,line width=.8pt]
		(0,0) rectangle (\xa,\yb);
		
		\draw[orange!80!black,line width=.6pt]
		(\xa,0) rectangle (1,\ya);

		\draw[densely dotted,gray!55]
		(\xa,0)--(\xa,\ya);
		
		\draw[densely dotted,gray!70]
		(0,\ya)--(1,\ya);
		
		\draw[densely dotted,gray!70]
		(0,\yb)--(\xa,\yb);

		\foreach \x/\lab in
		{
			0.333333/$\frac13$,
			1/$1$
		}
		{
			\draw (\x,0.025)--(\x,-0.025);
			\node[below] at (\x,-0.03){\lab};
		}
		
		\foreach \y/\lab in
		{
			1.777777/$\frac{16}{9}$,
			1.098/$\frac{4(2\sqrt2-\sqrt3)}{3\sqrt2}$
		}
		{
			\draw(0.02,\y)--(-0.02,\y);
			\node[left] at (-0.02,\y){\lab};
		}
		
		\node[below left] at (0,0){$0$};

		\node[
		casebox,
		draw=blue!70!black,
		text=blue!80!black
		] at (0.16,-0.50)
		{
			{\large\bfseries Case\\ (b)}
			
		};

		\node[
		casebox,
		draw=orange!80!black,
		text=orange!80!black
		] at (0.84,-0.50)
		{
			{\large\bfseries Case\\ (a)}
		};
		
	\end{tikzpicture}
	\vspace{.5cc}
	\caption{
		Admissible $(|a_0|,\lambda)$-region in
		Theorem~\ref{t1.1}. The estimate \eqref{e1.3}
		is valid for all points in the shaded region at the
		sharp radius $r=\frac13$.
	}
	\label{fig:region-t1.1}
\end{figure}
\begin{rem}
When $\lambda=0$, we get the classical result of Bohr's theorem.
\end{rem}
\par Inspired by the ideas and techniques underlying Theorems E and F, we aim to extend the same a more general setting. This leads us to the following result, which not only unifies Theorems E and F but also provides a natural generalization of them in a more comprehensive framework.
\begin{theo}\label{t6}
Let $f\in\mathcal{B}(\mathbb{D})$ and $\lambda\in(0,\infty)$. Then for each $m,n, N\in\mathbb{N}$, it holds that \beas |f(z^m)|+\lambda\sum_{k=N}^{\infty}|a_{nk}|r^{nk}\leq1\;\;\text{for}\;\;r\leq R_{\lambda,m,n,N},\eeas where $R_{\lambda,m,n,N}\in(0,1)$ is the positive root of the equation \beas 2\lambda r^{nN+m}+2\lambda r^{nN}-r^{n+m}+r^m+r^n-1=0,\eeas and the number $R_{\lambda,m,n,N}$ is the best possible. Also, \beas \lim\limits_{N\rightarrow \infty}R_{\lambda,m,n,N}=1\;\;\text{and}\;\;\lim\limits_{m\rightarrow\infty}R_{\lambda,m,n,N}=\alpha_N,\eeas where $\alpha_N$ is the positive root of the equation $2\lambda r^{nN}+r^n-1=0$. Moreover, \beas |f(z^m)|^2+\lambda\sum_{k=N}^{\infty}|a_{nk}|r^{nk}\leq1\;\;\text{for}\;\;r\leq R_{\lambda,m,n,N}^{\prime},\eeas where $R_{\lambda,m,n,N}^{\prime}\in(0,1)$ is unique root of the equation \beas \lambda r^{nN}(1+r^{m})-(1-r^n)(1-r^m)=0,\eeas and the number $R_{\lambda,m,n,N}^{\prime}$ can not be improved further.
\end{theo}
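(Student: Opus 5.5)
The plan is to combine two classical estimates, with $a=|a_0|$. The first is the Schwarz--Pick type bound
\[
|f(z^m)|\le \frac{a+r^m}{1+ar^m}.
\]
The second is the coefficient bound $|a_{k}|\le 1-a^2$ for $k\ge1$. Summing this over the subsequence $a_{nk}$, $k\ge N$, gives
\[
\lambda\sum_{k=N}^\infty |a_{nk}|r^{nk}\le \lambda(1-a^2)\frac{r^{nN}}{1-r^n}.
\]
The left-hand side of the first inequality is therefore at most
\[
\Phi(a)=\frac{a+r^m}{1+ar^m}+\lambda(1-a^2)\frac{r^{nN}}{1-r^n}.
\]
Since $1-\frac{a+r^m}{1+ar^m}=\frac{(1-a)(1-r^m)}{1+ar^m}$, the inequality $\Phi(a)\le1$ is equivalent to
\[
\lambda(1+a)(1+ar^m)\,r^{nN}\le (1-r^n)(1-r^m).
\]
The left side is increasing in $a$, so the worst case is $a\to1^-$. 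This yields the condition $2\lambda r^{nN}(1+r^m)\le (1-r^n)(1-r^m)$. Expanded, this is exactly
\[
2\lambda r^{nN+m}+2\lambda r^{nN}-r^{n+m}+r^m+r^n-1\le0,
\]
and it holds precisely for $r\le R_{\lambda,m,n,N}$.

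For existence and uniqueness of the root, set $G(r)=2\lambda r^{nN}(1+r^m)-(1-r^n)(1-r^m)$. Then $G(0)=-1$ and $G(1)=4\lambda>0$. Dividing by $(1-r^n)(1-r^m)$ turns the condition into $\frac{2\lambda r^{nN}(1+r^m)}{(1-r^n)(1-r^m)}\le 1$, whose left side is strictly increasing in $r$, so there is a unique root in $(0,1)$.

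For the limits:
\begin{itemize}
\item As $N\to\infty$, for each fixed $r<1$ we have $r^{nN}\to0$, so $G(r)<0$ eventually. Hence $R_{\lambda,m,n,N}\to1$.
\item As $m\to\infty$, $r^m\to0$ uniformly on compact subsets of $[0,1)$, so $G$ converges to $2\lambda r^{nN}-(1-r^n)$ there. Monotonicity then gives convergence of the roots to $\alpha_N$.
\end{itemize}

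For sharpness I will use $\phi_a(z)=\frac{a-z}{1-az}$ with $a\in[0,1)$, evaluated at $z=-r$ (more precisely, at a point with $z^m=-r^m$). Then $|f(z^m)|=\frac{a+r^m}{1+ar^m}$. The coefficients satisfy $|a_{nk}|=(1-a^2)a^{nk-1}$, so the sum equals $\lambda(1-a^2)\frac{a^{nN-1}r^{nN}}{1-a^nr^n}$. For $r>R_{\lambda,m,n,N}$, I will write the excess over $1$ as $(1-a)$ times an expression whose limit as $a\to1$ is
\[
-\frac{1-r^m}{1+r^m}+\frac{2\lambda r^{nN}}{1-r^n}.
\]
This limit is positive exactly when $G(r)>0$, which proves sharpness.

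The squared version follows the same pattern. Using
\[
1-\Big(\frac{a+r^m}{1+ar^m}\Big)^2=\frac{(1-a^2)(1-r^{2m})}{(1+ar^m)^2},
\]
the condition becomes $\lambda r^{nN}(1+ar^m)^2\le(1-r^n)(1-r^{2m})$. Letting $a\to1$ gives $\lambda r^{nN}(1+r^m)\le(1-r^n)(1-r^m)$, which is the stated equation for $R'_{\lambda,m,n,N}$. Sharpness again comes from $\phi_a$ with $a\to1$.

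The main obstacle is the sharpness computation. Here the extremal point $z$ must be chosen so that $z^m$ is real and negative, while the coefficient sum depends only on $r$; both the factor $a^{nN-1}$ and the denominator $1-a^nr^n$ must be tracked carefully in the limit $a\to1$.
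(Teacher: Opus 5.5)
Your proposal is correct and follows essentially the same route as the paper: the Schwarz--Pick bound for $|f(z^m)|$, the coefficient bound $|a_k|\le 1-|a_0|^2$, the worst case $|a_0|\to1^-$, and sharpness via the Möbius map $\frac{a-z}{1-az}$ with $a\to1^-$. The only differences are that your choice $z=re^{i\pi/m}$ handles every $m$ at once, whereas the paper splits by the parity of $m$ (using $\frac{a+z}{1+az}$ for even $m$), and that you also prove the two limit statements for $R_{\lambda,m,n,N}$, which the paper asserts without proof.
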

\begin{figure}[H]
	\centering
	\begin{tikzpicture}
		\begin{axis}[
			width=10.2cm,
			height=6.8cm,
			title={\bfseries Convergence of the sharp radii $R_N$ and $R_N'$},
			title style={font=\normalsize, yshift=1pt},
			xlabel={$N$},
			ylabel={$R_N,\ R_N'$},
			label style={font=\small},
			tick label style={font=\scriptsize},
			xtick={1,2,...,8},
			xmin=0.7,
			xmax=8.5,
			ymin=0,
			ymax=1.08,
			grid=both,
			major grid style={gray!25,line width=0.25pt},
			minor grid style={gray!10,line width=0.1pt},
			minor tick num=1,
			axis lines=left,
			axis line style={line width=0.8pt,-{Latex[length=2mm]}},
			every axis plot/.append style={line width=1.3pt},
			legend style={
				at={(0.98,0.30)},
				anchor=north east,
				font=\scriptsize,
				draw=gray!60,
				rounded corners,
				fill=white,
				line width=0.4pt,
				inner sep=2pt,
			},
			]
			
			\addplot[
			name path=upper, draw=none, forget plot
			] coordinates {
				(1,0.333333) (2,0.453398) (3,0.527098) (4,0.578900)
				(5,0.618034) (6,0.648989) (7,0.674277) (8,0.695438)
			};
			\addplot[
			name path=lower, draw=none, forget plot
			] coordinates {
				(1,0.236068) (2,0.376086) (3,0.462351) (4,0.522871)
				(5,0.568466) (6,0.604433) (7,0.633738) (8,0.658202)
			};
			\addplot[teal!25, forget plot] fill between[of=upper and lower];
			
				\addplot[
			thick, color=RoyalBlue, mark=*,
			mark options={fill=RoyalBlue, draw=white, line width=0.6pt},
			mark size=2.6pt,
			] coordinates {
				(1,0.236068) (2,0.376086) (3,0.462351) (4,0.522871)
				(5,0.568466) (6,0.604433) (7,0.633738) (8,0.658202)
			};
			\addlegendentry{$R_N$ \ (linear term)}
			
			\addplot[
			thick, color=BrickRed, mark=square*,
			mark options={fill=BrickRed, draw=white, line width=0.6pt},
			mark size=2.6pt,
			] coordinates {
				(1,0.333333) (2,0.453398) (3,0.527098) (4,0.578900)
				(5,0.618034) (6,0.648989) (7,0.674277) (8,0.695438)
			};
			\addlegendentry{$R_N'$ \ (quadratic term)}
			
			\draw[densely dashed, thick, gray!70] (axis cs:0.7,1) -- (axis cs:8.5,1);
			
			\node[
			font=\small, align=center, fill=yellow!15,
			rounded corners, draw=gray!50, inner sep=3pt
			] (limtxt) at (axis cs:4.3,0.85) {$R_N,\,R_N' \to 1$\\[1pt] as $N\to\infty$};
			\draw[-{Latex[length=2mm]}, gray!70]
			(limtxt.north) to[bend left=15] (axis cs:5.5,0.995);
			
			\node[font=\scriptsize, RoyalBlue, below right] at (axis cs:1,0.236068) {$0.236$};
			\node[font=\scriptsize, BrickRed, above right] at (axis cs:1,0.333333) {$0.333$};
			
		\end{axis}
	\end{tikzpicture}
	\caption{Monotone convergence of the sharp radii $R_N$ and $R_N'$ from Theorem~\ref{t6} to $1$ as $N\to\infty$. The shaded region represents the gap between the two bounds.}
	\label{fig:RN-vs-N}
\end{figure}

\par Motivated by the Theorem B, we establish the following more general result.
\begin{theo}\label{t1.3}
Let $f\in\mathcal{B}(\mathbb{D})$ and $\lambda$ be a non negative real number. Then \bea\label{e1.8} \sum_{k=0}^{\infty}|a_k|r^k+\lambda\sum_{k=0}^{\infty}|a_k|^2r^k\leq1\;\;\text{for}\;\;r\leq\frac{1}{3}\eea satisfying one of the following conditions: \begin{enumerate}
\item [{\emph{(a)}}] $0\leq\lambda\leq\frac{1}{2}\;\; \text{and}\;\;|a_0|\in[\frac{1}{3},1)$,\\
\item [{\emph{(b)}}] $0\leq\lambda\leq\frac{8-3\sqrt{2}}{4}$ and $|a_0|\in[0,\frac{1}{3})$,\end{enumerate} and the radius $1/3$ can not be improved further. 
\end{theo}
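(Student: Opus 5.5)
Set $a=|a_0|<1$ and use the Wiener-type coefficient bound $|a_k|\le 1-a^2$ for $k\ge1$. Since the left side of \eqref{e1.8} increases in $r$, it suffices to work at $r=1/3$; note that $\sum_{k\ge1}r^k=1/2$ there. We do not use the crude bound $|a_k|^2\le(1-a^2)^2$ in the quadratic sum. Instead we combine it with Parseval, $\sum_{k\ge1}|a_k|^2\le 1-a^2$. Together with the Bohr-type estimate for $\sum_{k\ge1}|a_k|r^k$, this is what should produce the two different constants $1/2$ and $(8-3\sqrt2)/4$.

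\textbf{Step 1 (linear part).} From $|a_k|\le 1-a^2$ we get $\sum_{k\ge1}|a_k|r^k\le (1-a^2)r/(1-r)$, which equals $(1-a^2)/2$ at $r=1/3$.

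\textbf{Step 2 (quadratic part).} Since $r^k\le r$ for $k\ge1$, Parseval gives $\sum_{k\ge1}|a_k|^2r^k\le r(1-a^2)=(1-a^2)/3$.

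\textbf{Step 3 (assemble).} The left side of \eqref{e1.8} at $r=1/3$ is then at most
\[
\Phi(a)=a+\frac{1-a^2}{2}+\lambda\Big(a^2+\frac{1-a^2}{3}\Big),
\]
and $1-\Phi(a)=(1-a)^2/2-\lambda(1+2a^2)/3$. So \eqref{e1.8} holds whenever
\[
\lambda\le\frac{3(1-a)^2}{2(1+2a^2)}.
\]
This already shows that the extremal functions do not attain Steps 1 and 2 simultaneously. The bound must therefore be sharpened, for instance by using $|a_1|\le 1-a^2$ together with $\sum_{k\ge2}|a_k|^2\le 1-a^2-|a_1|^2$ and optimizing over $|a_1|$. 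Alternatively, one can use the refined estimate $\sum|a_k|r^k\le a+r(1-a^2)/(1-r)$ jointly with $\sum|a_k|^2r^k$, treating the quadratic sum with the weights $r^k$ by Cauchy--Schwarz.

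\textbf{Step 4 (the two ranges of $a$).} The threshold $|a_0|=1/3$ is where the Bohr-type bound changes behaviour. For $a\ge1/3$, the natural extremal is $\phi_a$ from \eqref{e1.4a}, for which
\[
\sum|a_k|^2r^k=a^2+\frac{(1-a^2)^2r}{1-a^2r}.
\]
Evaluating the left side of \eqref{e1.8} at $r=1/3$ and asking that it be $\le1$ gives a constraint $\lambda\le g(a)$. I would show that $g$ is minimized on $[1/3,1)$ as $a\to1^-$, with limit value $1/2$; this recovers Theorem~B's constant. For $a<1/3$, the linear part has slack $1-a-(1-a^2)/2=(1-a)^2/2$, which is larger, and it must absorb $\lambda\big(a^2+\sum_{k\ge1}|a_k|^2r^k\big)$. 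I would bound $\sum_{k\ge1}|a_k|^2/3^k$ by $\frac13(1-a^2)$ via Parseval (or better) and minimize the resulting ratio over $a\in[0,1/3)$. The appearance of $\sqrt2$ in $(8-3\sqrt2)/4$ suggests that a critical point of this one-variable ratio occurs at an irrational point, which is where the constant comes from.

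\textbf{Sharpness.} For any $r>1/3$, take $\phi_a$ with $a\to1^-$. The classical Bohr sum already exceeds $1$ to first order in $1-a$, and the $\lambda$ term is nonnegative, so the radius $1/3$ cannot be improved.

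\textbf{Main obstacle.} I expect the hardest part to be the case $a<1/3$: choosing the right combination of coefficient bounds so that the one-variable optimization yields exactly $(8-3\sqrt2)/4$. I would first try combining $|a_k|\le 1-a^2$ with Parseval, then minimize $h(a)=\text{slack}(a)/\text{quadratic bound}(a)$ by calculus. If the minimum does not match $(8-3\sqrt2)/4$, I would revisit the quadratic bound.
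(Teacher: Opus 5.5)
There is a genuine gap. First, you keep the $k=0$ term $\lambda|a_0|^2$ in the quadratic sum. Read that way, the statement is false. The constant $f\equiv a_0$ gives $|a_0|+\lambda|a_0|^2\to1+\lambda$ as $|a_0|\to1$. In case (b), $\phi_a$ from \eqref{e1.4a} with $a\uparrow\frac13$ gives $\frac23+\frac{5}{13}\lambda$, which exceeds $1$ once $\lambda>\frac{13}{15}$. The intended sum, as in Theorem B and in the paper's proof, runs over $k\ge1$. Your Step 3 bound $\lambda\le3(1-a)^2/(2(1+2a^2))\to0$ is detecting exactly this, not a weakness of the estimates. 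Likewise, your Step 4 claim $g(a)\to\frac12$ contradicts your own formula: with the $a^2$ term included, $g(a)\to0$.

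Second, even with $k\ge1$ your estimates cannot reach either constant. Near $a=1$ everything is of order $(1-a)^2$. For $\phi_a$ at $r=\frac13$ the quadratic sum is $(1-a^2)^2/(3-a^2)\approx2(1-a)^2$. Wiener's bound in Step 1 leaves slack only $(1-a)^2/2$, so it caps you at $\lambda\le\frac14$ as $a\to1$, however good the quadratic estimate. Your Step 2 bound $(1-a^2)/3$ is only of order $1-a$. Steps 1--2 together give only $\lambda\le3(1-a)/(2(1+a))$, which tends to $0$ and equals $\frac34<(8-3\sqrt2)/4$ at $a=\frac13$, so not even case (b) follows. Splitting off $a_1$ and applying Parseval to the tail still leaves the term $r^2(1-a^2)$, of order $1-a$. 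Finally, Step 4 evaluates the functional only on $\phi_a$, which gives necessary conditions on $\lambda$, not an inequality for every $f$.

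The missing ingredient is the sharp Bohr-type estimate of Lemma~\ref{lem1.1}:
$\sum_{k\ge1}|a_k|r^k\le r(1-a^2)/(1-ra)$ for $a\ge r$, and $\le r\sqrt{1-a^2}/\sqrt{1-r^2}$ for $a<r$.
This is where the threshold $|a_0|=\frac13$ enters.

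In case (a) it must be paired with a quadratic estimate of order $(1-a^2)^2$. The paper uses Lemma~\ref{lem1.3}, $\sum_{k\ge1}|a_k|^2r^k\le r(1-a^2)^2/(1-a^2r)$. Both bounds are attained by $\phi_a$, and the sufficient condition becomes $\lambda(1+x)^2(3-x)\le2(3-x^2)$, i.e.\ the paper's cubic $\phi(x)\le0$. Since $\phi'(x)=\lambda(1+x)(5-3x)+4x>0$, the worst case is $\phi(1)=8\lambda-4$. The bound $|a_k|^2\le(1-a^2)^2$ that you discarded would also do: with Lemma~\ref{lem1.1} it gives $\lambda\le4/((1+a)^2(3-a))$, which exceeds $\frac12$ on $[\frac13,1)$.

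In case (b) there is no optimization at all. Use $|a_0|<\frac13$, $r/\sqrt{1-r^2}=1/(2\sqrt2)$ at $r=\frac13$, and a quadratic part at most $\lambda/3$. The last follows from $(1-a^2)^2/(3-a^2)\le\frac13$, or simply from your Parseval step, which is fine here. Together these give $\frac13+\frac{1}{2\sqrt2}+\frac{\lambda}{3}\le1$, which is exactly $\lambda\le(8-3\sqrt2)/4$. So the $\sqrt2$ comes from $\sqrt{1-r^2}$ at $r=\frac13$, not from an interior critical point. The constant is an artifact of these crude bounds rather than an optimum. The paper's write-up of this step bounds by $\lambda/(3-|a_0|^2)$ and then calls its $\psi$ decreasing, which is backwards; the bound $\lambda/3$ is what actually yields the constant.

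Your sharpness argument is fine and is essentially the paper's.
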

\begin{rem}
If we set $\lambda=0$, we can easily obtain the classical Theorem of Bohr Inequality. 
\end{rem}
\par In order to generalize Theorem G, we obtain the following result.
\begin{theo}\label{t1.6}
	Let $f\in\mathcal{B}(\mathbb{D})$. Then for any positive real number $\lambda$, we have \beas |a_0|+\lambda\sum_{k=1}^{\infty}|a_{nk}|r^{nk}\leq1\;\;\text{for}\;\;r\leq R_{\lambda,n},\eeas where $R_{\lambda,n}\in(0,1)$ is a positive integer of the equation \beas (2\lambda +1)r^n-1=0,\eeas and the number $R_{\lambda,n}$ can not be improved further. Moreover, \beas |a_0|^2+\lambda\sum_{k=1}^{\infty}|a_{nk}|r^{nk}\leq1\;\; \text{for}\;\;r\leq\left(\frac{1}{\lambda+1}\right)^{\frac{1}{n}},\eeas and the number $\left(\frac{1}{\lambda+1}\right)^{\frac{1}{n}}$ is the best possible.
\end{theo}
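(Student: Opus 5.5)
We prove the first inequality using the Wiener-type coefficient bound $|a_k|\le 1-|a_0|^2$ for $k\ge1$, valid for every $f\in\mathcal{B}(\mathbb{D})$. Write $a=|a_0|\in[0,1)$. Summing the geometric series gives
\begin{equation*}
|a_0|+\lambda\sum_{k=1}^{\infty}|a_{nk}|r^{nk}\le a+\lambda(1-a^2)\frac{r^n}{1-r^n}.
\end{equation*}
Set $\rho=r^n$. The condition $\rho\le R_{\lambda,n}^n=\frac{1}{2\lambda+1}$ is equivalent to $\frac{\rho}{1-\rho}\le\frac{1}{2\lambda}$. Under it, the right side is at most
\begin{equation*}
a+\frac{1-a^2}{2}=1-\frac{(1-a)^2}{2}\le1.
\end{equation*}
Hence the inequality holds for $r\le R_{\lambda,n}=(2\lambda+1)^{-1/n}$, the unique root in $(0,1)$ of $(2\lambda+1)r^n-1=0$.

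For the second inequality we use the same coefficient bound:
\begin{equation*}
a^2+\lambda(1-a^2)\frac{\rho}{1-\rho}=1-(1-a^2)\Bigl(1-\lambda\frac{\rho}{1-\rho}\Bigr).
\end{equation*}
This is at most $1$ precisely when $\lambda\rho\le1-\rho$, that is, when $\rho\le\frac{1}{\lambda+1}$. Notably, no optimization over $a$ is needed here.

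For sharpness we use the $n$-symmetric extremal function
\begin{equation*}
f(z)=\phi_a(z^n)=\frac{a-z^n}{1-az^n}, \qquad a\in(0,1),
\end{equation*}
with $\phi_a$ as in \eqref{e1.4a}. Its coefficients are $a_0=a$ and $a_{nk}=-(1-a^2)a^{k-1}$. For this function the first sum equals
\begin{equation*}
a+\lambda(1-a^2)\frac{\rho}{1-a\rho}.
\end{equation*}
It exceeds $1$ iff $\lambda(1+a)\rho>1-a\rho$, i.e. iff $\rho>\frac{1}{\lambda(1+a)+a}$. As $a\to1^-$, this threshold tends to $\frac{1}{2\lambda+1}$. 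So for any $r>R_{\lambda,n}$ we can choose $a$ close to $1$ to violate the inequality.

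Similarly, for the second inequality the sum equals
\begin{equation*}
a^2+\lambda(1-a^2)\frac{\rho}{1-a\rho},
\end{equation*}
which exceeds $1$ iff $\lambda\rho>1-a\rho$. As $a\to1^-$, this threshold tends to $\frac{1}{\lambda+1}$, giving sharpness of $(\lambda+1)^{-1/n}$.

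The main (mild) obstacle is the sharpness step. The extremal configuration is only approached in the limit $a\to1^-$, so we must argue via a limiting family of functions rather than a single extremal function. We must also check that the lower-bound computation for $\phi_a(z^n)$ uses exactly the coefficients indexed by multiples of $n$, which holds because $f$ is a function of $z^n$.
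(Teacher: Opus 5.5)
Your proof is correct and follows essentially the same route as the paper. Both use the bound of Lemma \ref{lem1.2} summed as a geometric series for the upper estimate, and a Möbius extremal family with $a\to1^-$ for sharpness, with the limiting threshold $(2\lambda+1)r^n=1$. The only differences are cosmetic: you take $\phi_a(z^n)$ where the paper takes $f_a(z)$ from \eqref{e1.6a}, and you write out the second part, which the paper omits.
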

\par Inspired by Theorem F, we seek to extend it by formulating a more general version of the result as follows.
\begin{theo}\label{t1.7}
Let $f\in\mathcal{B}(\mathbb{D})$ and $\lambda\in(0,\infty)$.Then \beas\Big||a_0|+\lambda\sum_{k=1}^{\infty}(-1)^k|a_{k}|r^{k}\Big|\leq1\;\; \text{for all}\;\; r\leq\frac{1}{\sqrt{2\lambda+1}},\eeas and the number $\frac{1}{\sqrt{2\lambda+1}}$ can not be improved further.  
\end{theo}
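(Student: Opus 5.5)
The plan is to bound the real quantity $T(r)=|a_0|+\lambda\sum_{k\ge1}(-1)^k|a_k|r^k$ from above and from below separately, using the classical Wiener coefficient estimate $|a_k|\le 1-|a_0|^2$ for $k\ge1$. Sharpness will come from a Möbius map composed with $z^2$. Write $a=|a_0|\in[0,1)$.

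\emph{Upper bound.} Discard the odd-index terms, which enter with a minus sign, and estimate the even ones:
\beas T(r)\le a+\lambda(1-a^2)\sum_{k\ge1}r^{2k}=a+\lambda(1-a^2)\frac{r^2}{1-r^2}.\eeas
This is $\le1$ exactly when $\lambda(1+a)r^2\le 1-r^2$. The left side increases in $a$, so the worst case is $a\to1^-$, which gives the condition $2\lambda r^2\le 1-r^2$, i.e. $r\le 1/\sqrt{2\lambda+1}$.

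\emph{Sharpness.} Take $f(z)=\phi_a(z^2)$ with $\phi_a$ as in \eqref{e1.4a}. Then the only nonzero coefficients are $a_0=a$ and $a_{2k}=-(1-a^2)a^{k-1}$, so
\beas T(r)=a+\lambda(1-a^2)\frac{r^2}{1-ar^2}.\eeas
This exceeds $1$ iff $\lambda(1+a)r^2>1-ar^2$. For any $r>1/\sqrt{2\lambda+1}$, letting $a\to1^-$ makes this inequality strict, so the radius cannot be improved.

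\emph{Lower bound, and the main obstacle.} We also need $T(r)\ge-1$. The same method gives
\beas T(r)\ge a-\lambda(1-a^2)\frac{r}{1-r^2},\eeas
so it suffices to have $\lambda(1-a)r\le 1-r^2$. The worst case is now $a=0$, where the condition reads $\lambda r\le 1-r^2$. At $r^2=1/(2\lambda+1)$ this becomes $\lambda/\sqrt{2\lambda+1}\le 2\lambda/(2\lambda+1)$, which holds only when $\lambda\le 3/2$.

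This is not merely a weakness of the estimate. For $f(z)=z$ we get $T(r)=-\lambda r$, and at $r=1/\sqrt{2\lambda+1}$ this is $-\lambda/\sqrt{2\lambda+1}<-1$ as soon as $\lambda>3/2$; for example, $\lambda=10$ gives $T\approx-2.18$. So the two-sided statement as written needs either the restriction $0<\lambda\le 3/2$ or a one-sided reading, i.e. only the upper bound $T(r)\le1$.

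I would therefore prove the theorem under $\lambda\le3/2$, where both bounds hold on $r\le1/\sqrt{2\lambda+1}$. For $\lambda>3/2$ I would record that only the upper inequality holds on that range, and cite $f(z)=z$ as the counterexample to the absolute-value form.
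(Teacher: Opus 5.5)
Your upper bound and sharpness argument are the same as the paper's: discard the odd terms, use $|a_k|\le 1-|a_0|^2$, then let $a\to1^-$ in $\phi_a(z^2)$. You are also right that the absolute-value form cannot hold for all $\lambda>0$. The paper's lower-bound chain is not valid: the step $-\lambda\sum|a_{2k-1}|r^{2k-1}\ge-|a_0|-\sum|a_{2k-1}|r^{2k-1}$ already fails for $f(z)=z$ when $\lambda>1$, and replacing $r^{2k-1}$ by $r^{2k}$ goes the wrong way.

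The gap is in your threshold. With $r_0=(2\lambda+1)^{-1/2}$, the function $f(z)=z$ gives $T(r_0)=-\lambda/\sqrt{2\lambda+1}$. This is $<-1$ iff $\lambda^2>2\lambda+1$, i.e. iff $\lambda>1+\sqrt2$, not $\lambda>3/2$; at $\lambda=2$ you get $-2/\sqrt5\approx-0.89$. So $3/2$ is only where your coefficient-by-coefficient estimate $\sum_k|a_{2k-1}|r^{2k-1}\le(1-|a_0|^2)r/(1-r^2)$ runs out. Your conclusion that the two-sided inequality fails for every $\lambda>3/2$, with $f(z)=z$ as counterexample, is false on $(3/2,1+\sqrt2]$.

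The missing idea is to apply Bohr's theorem to the odd part, rather than Wiener's bound to each odd coefficient. Write $f_o(z)=\frac12\bigl(f(z)-f(-z)\bigr)=z\,h(z^2)$ with $h(w)=\sum_{k\ge1}a_{2k-1}w^{k-1}$. Since $|f_o|<1$, the maximum principle gives $|h|\le1$ on $\mathbb{D}$, so by \eqref{e1}
\[
\sum_{k\ge1}|a_{2k-1}|r^{2k-1}=r\sum_{k\ge1}|a_{2k-1}|\,(r^2)^{k-1}\le r\qquad\text{for } r\le 1/\sqrt3 .
\]
Hence $T(r)\ge|a_0|-\lambda r\ge-\lambda r$ for $r\le1/\sqrt3$, with equality for $f(z)=z$.

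\begin{itemize}
\item If $\lambda\ge1$, then $r_0\le1/\sqrt3$, so $T\ge-1$ on $[0,r_0]$ exactly when $\lambda r_0\le1$, i.e. $\lambda\le1+\sqrt2$.
\item If $\lambda<1$, your own estimate already works.
\end{itemize}

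So the two-sided statement holds precisely for $0<\lambda\le1+\sqrt2$. For larger $\lambda$ the lower bound survives only for $r\le1/\lambda$. That, not $\lambda\le 3/2$, is the restriction you should record.
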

\begin{theo}\label{t1.4}
Suppose $f\in\mathcal{B}(\mathbb{D})$ and $\lambda\in(0,\infty)$. Then \bea\label{e1.9} |f(z)-a_0|^2+|a_0|+\lambda\sum_{k=1}^{\infty}|a_k|r^k\leq1\;\;\text{for}\;\;r\leq\frac{1}{3}\eea satisfying one of the following conditions: \begin{enumerate}
\item [$(a)$] when $|a_0|\geq\frac{1}{3}$, then $\lambda\in(0,1]$,
\item [$(b)$] when $|a_0|<\frac{1}{3}$, then $\lambda\in\left(0,\frac{13\sqrt{2}}{12}\right]$.
\end{enumerate}
\end{theo}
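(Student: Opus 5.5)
The plan is to set $a=|a_0|<1$, use monotonicity in $r$ to reduce everything to $r=1/3$, and then bound the two non-constant terms separately by standard coefficient and Schwarz--Pick estimates. The conditions on $\lambda$ in (a) and (b) should then follow from explicit one-variable inequalities in $a$.

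Three classical facts for $f\in\mathcal{B}(\mathbb{D})$ will be used.
\begin{enumerate}
\item[(i)] Schwarz--Pick gives $|f(z)-a_0|\le \frac{(1-a^2)r}{1-ar}$.
\item[(ii)] The Bombieri-type estimate gives $\sum_{k\ge1}|a_k|r^k\le \frac{(1-a^2)r}{1-ar}$ when $a\ge r$.
\item[(iii)] Cauchy--Schwarz together with $\sum_{k\ge1}|a_k|^2\le 1-a^2$ gives $\sum_{k\ge1}|a_k|r^k\le \frac{r\sqrt{1-a^2}}{\sqrt{1-r^2}}$ for all $a$.
\end{enumerate}
All the right-hand sides increase in $r$, so it suffices to treat $r=1/3$.

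\emph{Case (a), $a\ge 1/3$.} Here $a\ge r$ at $r=1/3$, so (ii) applies. Using (i) and (ii) at $r=1/3$, the left side of \eqref{e1.9} is at most
\[
a+\frac{(1-a^2)^2}{(3-a)^2}+\lambda\frac{1-a^2}{3-a}.
\]
Since this is increasing in $\lambda$, it suffices to take $\lambda=1$. After subtracting $a$ and dividing by $1-a>0$, the required inequality becomes
\[
\frac{(1+a)(4-a-a^2)}{(3-a)^2}\le 1 .
\]
This is equivalent to $a^3+3a^2-9a+5\ge0$, i.e. $(a-1)^2(a+5)\ge0$, which is true.

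\emph{Case (b), $a<1/3$.} Now (ii) is not available, so I use (i) for the quadratic term and (iii) for the sum. At $r=1/3$, (iii) gives $\sum_{k\ge1}|a_k|r^k\le \frac{\sqrt{1-a^2}}{2\sqrt2}\le\frac{1}{2\sqrt2}$. With $\lambda\le\frac{13\sqrt2}{12}$ this term contributes at most $\frac{13}{24}$. It therefore remains to show
\[
h(a):=a+\Big(\frac{1-a^2}{3-a}\Big)^2\le \frac{11}{24}\qquad\text{on } [0,1/3).
\]
Write $u(a)=\frac{1-a^2}{3-a}$. Then $u'(a)=\frac{a^2-6a+1}{(3-a)^2}$, so $|u'|\le 1/9$ and $0\le u\le 1/3$ on $[0,1/3]$. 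Hence $h'(a)=1+2uu'\ge 1-\frac{2}{27}>0$, so $h$ is increasing. Therefore $h(a)\le h(1/3)=\frac13+\frac19=\frac49<\frac{11}{24}$, which finishes case (b).

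The main obstacle is case (b). The crude choice $\sqrt{1-a^2}\le1$ wastes some room, so the decisive point is whether the split ``(i) for the square term, (iii) for the sum'' leaves enough slack to reach the constant $\frac{13\sqrt2}{12}$; by the estimate above it does, with margin $\frac{11}{24}-\frac49=\frac1{72}$. If one instead wanted the optimal $\lambda$ in case (b), one would keep the factor $\sqrt{1-a^2}$ and minimize over $a$, but that is not needed for the stated range.
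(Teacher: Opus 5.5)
Your proposal is correct in substance. Case (a) follows the paper, but case (b) uses a different bound for the square term, and one step there is wrong as written, although it is easy to fix.

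\textbf{Case (a).} This is the paper's argument. The paper bounds $|f(z)-a_0|$ by $\sum_{k\ge1}|a_k|r^k$ and applies Lemma~\ref{lem1.1}. At $r=1/3$ this gives the same bound $\frac{1-a^2}{3-a}$ that you get from Schwarz--Pick. The paper keeps $\lambda$ general and shows that $\phi(x)=-x^3-(\lambda+2)x^2+(2\lambda+7)x+3\lambda-8$ is increasing on $[1/3,1)$, because $\phi''<0$ and $\phi'(1)=0$. Hence $\phi\le\phi(1)=4\lambda-4$. Your reduction to $\lambda=1$ with the factorization $(a-1)^2(a+5)\ge0$ is the same computation, since $\phi(x)=-(x-1)^2(x+5)$ when $\lambda=1$, and it is tidier.

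\textbf{Case (b): the faulty step.} Two of your auxiliary bounds are false.
\begin{itemize}
\item The function $u(a)=\frac{1-a^2}{3-a}$ equals $\frac13$ at $a=0$ and at $a=\frac13$. In between it rises to $6-4\sqrt2\approx0.343$ at $a=3-2\sqrt2$, where $u'$ vanishes. So $u\le\frac13$ fails.
\item $u'(\frac13)=-\frac18$, so $|u'|\le\frac19$ fails near the right endpoint.
\end{itemize}
Hence the estimate $h'\ge1-\frac{2}{27}$ is not justified as written.

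The conclusion still holds. With the correct bounds $0\le u\le6-4\sqrt2$ and $|u'|\le\frac18$ you get $h'\ge1-\frac{6-4\sqrt2}{4}>0$. So $h\le h(\frac13)=\frac49<\frac{11}{24}$. More simply, $u^2\le(6-4\sqrt2)^2=68-48\sqrt2<\frac18$, and together with $a<\frac13$ this gives $h<\frac{11}{24}$ without any monotonicity argument.

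\textbf{Comparison with the paper in case (b).} That last observation is essentially the paper's route. The paper controls the square term with the same estimate it uses for the sum, namely your (iii): $|f(z)-a_0|\le\sum_{k\ge1}|a_k|r^k\le\frac{\sqrt{1-|a_0|^2}}{2\sqrt2}$ at $r=\frac13$. This gives $|f(z)-a_0|^2\le\frac18$. Since $|a_0|<\frac13$, the left side is below $\frac13+\frac18+\frac{\lambda}{2\sqrt2}$, which is at most $1$ exactly when $\lambda\le\frac{13\sqrt2}{12}$. That is where the constant comes from, and no calculus is needed.

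Your Schwarz--Pick bound for the square term is sharper, and it is the source of your extra slack $\frac1{72}$. The stated range of $\lambda$ does not need that slack, however, and the extra analysis of $h$ is where the error crept in.
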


\begin{rem}
If we put $\lambda=1$, we can easily get Theorem C.
\end{rem}
\par Motivated by \cite{Wu-Wang-Long-RACSAM-2022}, we establish the following result, which admits a convex combination interpretation and extends the earlier concept to the present framework.
\begin{theo}\label{t5}
Suppose $f\in\mathcal{B}(\mathbb{D})$. Then for $\lambda\in(0,1)$, it follows that \beas \lambda|f(z)|^2+(1-\lambda)\sum_{k=1}^{\infty}|a_k|^2r^{2k}\leq1\;\;\text{for}\;\;r\leq R_{\lambda},\eeas where \beas R_{\lambda}=\begin{cases} \frac{-2(1-\lambda)+\sqrt{4+\lambda-4\lambda^2}}{9-8\lambda},\;\;\text{when}\;\;\frac{1}{3}<\lambda<\frac{1}{2}\vspace{1.2mm}\\\text{min}\left\{\sqrt{\frac{\lambda}{1-2\lambda}},\frac{-2(1-\lambda)+\sqrt{4+\lambda-4\lambda^2}}{9-8\lambda}\right\},\;\;\text{when}\;\;0<\lambda<\frac{1}{3},\end{cases}\eeas and $R_{\lambda}\in(0,1)$ is the positive root of the equation \beas (9-8\lambda)r^2+4(1-\lambda)r-\lambda=0.\eeas \end{theo}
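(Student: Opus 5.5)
We follow the standard route behind Theorem D. Write $a=|a_0|\in[0,1)$. We need two classical estimates for $f\in\mathcal B(\mathbb D)$:
\beas |f(z)|\le \frac{r+a}{1+ar},\qquad \sum_{k=1}^{\infty}|a_k|^2r^{2k}\le \frac{r^2(1-a^2)^2}{1-a^2r^2}.\eeas
The first is the Schwarz--Pick estimate. The second follows from $\sum_{k\ge1}|a_k|^2\rho^{2k}\le 1-a^2$ applied to $f(\rho z)$, together with the sharper bound of Kayumov--Ponnusamy used in Theorem D. If needed, we fall back on the cruder $|a_k|\le 1-a^2$, which gives $\sum_{k\ge1}|a_k|^2r^{2k}\le (1-a^2)^2 r^2/(1-r^2)$.

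With these estimates, the left-hand side is bounded by
\beas \Phi(a,r)=\lambda\Big(\frac{r+a}{1+ar}\Big)^2+(1-\lambda)\frac{r^2(1-a^2)^2}{1-a^2r^2},\eeas
and the task becomes to show $\sup_{a\in[0,1)}\Phi(a,r)\le 1$ for $r\le R_\lambda$. Since $\Phi$ is increasing in $r$, it suffices to check $r=R_\lambda$. We pass to $t=a^2$ (or keep $a$) and study $1-\Phi$. Clearing the denominator $(1+ar)^2(1-a^2r^2)$ gives a polynomial in $a$ with a factor $(1-a)$, because $\Phi(1,r)=\lambda$ hoped, or otherwise $\le 1$. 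The cofactor then has to be shown nonnegative on $[0,1]$, which splits into the two relevant regimes.

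The boundary case $a=0$ gives $\Phi(0,r)=\lambda r^2+(1-\lambda)r^2=r^2$, which is harmless. The interior critical point is where the quadratic $(9-8\lambda)r^2+4(1-\lambda)r-\lambda=0$ appears. Evaluating $\Phi$ at the maximizing $a$ (expected near $a=1$, by expanding $1-\Phi$ in powers of $(1-a)$) gives this quadratic, and its positive root is $\frac{-2(1-\lambda)+\sqrt{4+\lambda-4\lambda^2}}{9-8\lambda}$.

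The second expression $\sqrt{\lambda/(1-2\lambda)}$, relevant only for $\lambda<1/3$ (where it is less than $1$), should come from the small-$a$ regime. There the second-order coefficient in $a$ of $1-\Phi$ changes sign, so we also need nonnegativity near $a=0$, which forces $r^2\le \lambda/(1-2\lambda)$. This explains the minimum in the definition of $R_\lambda$.

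The main obstacle is the one-variable polynomial inequality in $a$ at $r=R_\lambda$, specifically showing that the cofactor after extracting $(1-a)$ has no root in $(0,1)$. I would handle it by checking its values at $a=0$ and $a=1$ and proving it is concave or monotone in $a$ (via the sign of the derivative) in each $\lambda$-range. For sharpness, I would test $f=\phi_a$ from \eqref{e1.4a} at $z=r$ and let $a\to1^-$ (respectively $a\to0^+$). This shows that $1-\Phi<0$ as soon as $r$ exceeds the corresponding root.
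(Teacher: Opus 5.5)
Your reduction to $\sup_{a\in[0,1)}\Phi(a,r)\le 1$ is fine; it is also the paper's starting point. However, the structure you then assume for $\Phi$ is false, and every step built on it fails.

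First, $\Phi(1,r)=\lambda<1$. So the cleared polynomial
\[
\lambda(r+a)^2(1-ar)+(1-\lambda)r^2(1-a^2)^2(1+ar)-(1+ar)^2(1-ar)
\]
equals $-(1-\lambda)(1+r)^2(1-r)\neq 0$ at $a=1$. There is no factor $(1-a)$, and hence no cofactor to analyse.

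More fundamentally, $\Phi(a,r)<1$ for \emph{all} $a,r\in[0,1)$. Indeed $\big(\frac{r+a}{1+ar}\big)^2<1$, and $1-a^2r^2-r^2(1-a^2)^2\ge 1-a^2r^2-r^2(1-a^2)=1-r^2>0$. So $\Phi$ is a convex combination of two numbers below $1$. This has three consequences:
\begin{enumerate}
\item[(i)] The quadratic $(9-8\lambda)r^2+4(1-\lambda)r-\lambda$ cannot arise from maximizing $\Phi$.
\item[(ii)] Nor can $\sqrt{\lambda/(1-2\lambda)}$. Near $a=0$ one has $1-\Phi=1-r^2+O(a)>0$, whatever the sign of the $a^2$-coefficient.
\item[(iii)] Your sharpness test collapses. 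For $\phi_a$ at $z=-r$ both estimates are equalities, so the left-hand side is exactly $\Phi(a,r)<1$. It tends to $\lambda$ as $a\to 1^-$ and to $r^2$ as $a\to 0^+$.
\end{enumerate}
The inequality holds for every $r<1$, so $R_\lambda$ is not a sharp radius; the statement does not claim sharpness.

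The missing observation is that the inequality is immediate for every $r<1$. We have $|f(z)|^2<1$, and Parseval on $|z|=r$ gives $\sum_{k\ge1}|a_k|^2r^{2k}\le 1-|a_0|^2$. Hence the left-hand side is at most $\lambda+(1-\lambda)(1-|a_0|^2)\le 1$, in particular for $r\le R_\lambda$.

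In the paper, the two constants do not come from an extremal problem either. After clearing denominators to a quintic $\phi(x)$ with $x=|a_0|$, the authors run a derivative chain:
\begin{enumerate}
\item[(i)] $\phi^{(iv)}\ge 0$ gives $\phi'''\le\phi'''(1)=6r\big((9-8\lambda)r^2+4(1-\lambda)r-\lambda\big)$, which is where the quadratic comes from;
\item[(ii)] a sign condition on $\phi''(0)$ produces $\sqrt{\lambda/(1-2\lambda)}$;
\item[(iii)] the chain ends with $\phi\le\phi(0)=r^2-1$.
\end{enumerate}
These are sufficient conditions for that particular monotonicity argument, not thresholds. So trying to recover them from $\sup_a\Phi$ cannot succeed.

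The chain itself is also unreliable. The $x^2$-coefficient of the quintic is $\lambda-r^2$, not $2\lambda r^2-r^2+\lambda$. Moreover, $\lambda-(1-2\lambda)r^2\le 0$ would require $r\ge\sqrt{\lambda/(1-2\lambda)}$, not $r\le\sqrt{\lambda/(1-2\lambda)}$. The one-line argument above sidesteps all of this.
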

We conclude this section with a comparative summary of the principal results of this paper. The following table compares each theorem with existing results and lists the corresponding sharp Bohr radius together with the admissible range of $\lambda$.
	\begin{table}[H]
		\centering
		\label{tab:overview}
		
		\renewcommand{\arraystretch}{1.5}
		\setlength{\tabcolsep}{5pt}
		\arrayrulecolor{rulecolor}
		\setlength{\arrayrulewidth}{1.2pt}
		
		\rowcolors{2}{rowtint}{white}
		\begin{tabular}{|
				>{\centering\arraybackslash}m{1.65cm}|
				>{\centering\arraybackslash}m{3.75cm}|
				>{\centering\arraybackslash}m{2.3cm}|
				>{\centering\arraybackslash}m{1.5cm}|
				>{\centering\arraybackslash}m{4.35cm}|}
			\hline
			\rowcolor{headernavy}
			\textcolor{white}{\textbf{Theorem}}
			&
			\textcolor{white}{\textbf{Principal feature}}
			&
			\textcolor{white}{\textbf{Generalizes}}
			&
			\textcolor{white}{\textbf{Sharp radius}}
			&
			\textcolor{white}{\textbf{Admissible $\lambda$}}
			\\
			\hline
			
			\ref{t1.1}
			&
			Bohr inequality involving the area functional
			&
			Theorem A
			&
			$\dfrac13$
			&
			$\displaystyle
			\lambda\le
			\min\!\left\{
			\dfrac{16}{9},
			\dfrac{4(2\sqrt2-\sqrt3)}
			{3\sqrt2}
			\right\}$
			\\
			\hline
			
			\ref{t6}
			&
			Sparse-coefficient Bohr inequality for
			$|f(z^m)|$
			&
			Theorems E--F
			&
			$R_{\lambda,m,n,N}$
			&
			$\lambda>0$
			\\
			\hline
			
			\ref{t1.3}
			&
			Quadratic coefficient refinement
			&
			Theorem B
			&
			$\dfrac13$
			&
			$\displaystyle
			\lambda\le
			\min\!\left\{
			\dfrac12,
			\dfrac{8-3\sqrt2}{4}
			\right\}$
			\\
			\hline
			
			\ref{t1.6}
			&
			Bohr inequality for
			$n$-symmetric coefficients
			&
			Theorem G
			&
			$R_{\lambda,n}$
			&
			Positive root of
			$(2\lambda+1)r^n=1$
			\\
			\hline
			
			\ref{t1.7}
			&
			Alternating Bohr inequality
			&
			Classical alternating
			&
			$\dfrac1{\sqrt{2\lambda+1}}$
			&
			$\lambda>0$
			\\
			\hline
			
			\ref{t1.4}
			&
			Function-value refinement
			&
			Theorem C
			&
			$\dfrac13$
			&
			$\begin{array}{@{}c@{}}
				0<\lambda\le1,\; |a_0|\ge\dfrac13,\\[1mm]
				0<\lambda\le\dfrac{13\sqrt2}{12},\;
				|a_0|<\dfrac13
			\end{array}$
			\\
			\hline
			
			\ref{t5}
			&
			Convex-combination
			Bohr inequality
			&
			Wu--Wang--Long (2022)
			&
			$R_\lambda$
			&
			$0<\lambda<\dfrac12$
			\\
			\hline
			
		\end{tabular}\vspace{.5cc}
		\caption{Comparative summary of the principal parameter-dependent Bohr-type inequalities established in this paper.}
	\end{table}
\section{Key Lemmas}
Throughout this paper, several auxiliary results play a crucial role in establishing our main theorems. For the convenience of the reader, and to ensure clarity and continuity in the presentation, we state these lemmas below, as they will be used repeatedly in the subsequent analysis.
\begin{lem}\label{lem1.1}{\em\cite{Kayumov-aaponnusamy-CMFT-2017}}
	For any $f\in\mathcal{B}(\mathbb{D})$, we have \beas \sum_{k=1}^{\infty}|a_k|r^k\leq\begin{cases} r\frac{1-|a_0|^2}{1-r|a_0|},\;\;\text{when}\;\; |a_0|\geq r,\vspace{1.2mm}\\r\frac{\sqrt{1-|a_0|^2}}{\sqrt{1-r^2}},\;\;\text{when}\;\;|a_0|<r.\end{cases}\eeas
\end{lem}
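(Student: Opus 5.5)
The statement is the Kayumov--Ponnusamy coefficient lemma, and I would prove both cases with one Cauchy--Schwarz argument using a free weight, together with a bound on $\sum_{k\ge1}|a_k|^2t^{2k}$. Write $a=|a_0|$ and, after a rotation, assume $a_0=a\ge 0$. For any $t\in(0,1]$, Cauchy--Schwarz gives
\beas \sum_{k=1}^{\infty}|a_k|r^k=\sum_{k=1}^{\infty}\left(|a_k|t^k\right)\left(\frac{r}{t}\right)^k\leq\Big(\sum_{k=1}^{\infty}|a_k|^2t^{2k}\Big)^{1/2}\Big(\sum_{k=1}^{\infty}\frac{r^{2k}}{t^{2k}}\Big)^{1/2}. \eeas
The first factor is controlled by subordination, and $t$ is then chosen according to whether $a\ge r$ or $a<r$.

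\emph{Step 1 (subordination bound).} Since $|f|\le 1$ and $f(0)=a$, the function $f$ is subordinate to $\phi_a(z)=\frac{a-z}{1-az}$ from \eqref{e1.4a}. Indeed, $f=\phi_a\circ\omega$ with $\omega=\phi_a\circ f$, and $\omega$ is a self-map of $\mathbb{D}$ with $\omega(0)=0$. By Littlewood's subordination principle, equivalently Rogosinski's theorem on $L^2$ means, for every $t\in(0,1)$,
\beas \frac{1}{2\pi}\int_0^{2\pi}|f(te^{i\theta})|^2d\theta\le\frac{1}{2\pi}\int_0^{2\pi}|\phi_a(te^{i\theta})|^2d\theta. \eeas
By Parseval, and because the constant terms agree ($|a_0|^2=a^2$), this yields
\beas \sum_{k=1}^{\infty}|a_k|^2t^{2k}\le(1-a^2)^2\sum_{k=1}^{\infty}a^{2k-2}t^{2k}=\frac{(1-a^2)^2t^2}{1-a^2t^2}. \eeas
At $t=1$ I would instead use the direct Parseval bound $\sum_{k\ge1}|a_k|^2\le 1-a^2$.

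\emph{Step 2 (case $a\ge r$).} Choose $t^2=r/a\le 1$, so that $r^2/t^2=ra$. Then $\sum_{k\ge1}(ra)^k=\frac{ra}{1-ra}$, and the first factor becomes $\frac{(1-a^2)^2(r/a)}{1-ar}$. The product of the two factors is $\frac{r^2(1-a^2)^2}{(1-ar)^2}$, and taking the square root gives $r\frac{1-a^2}{1-ra}$. For $a=0$ the case is vacuous unless $r=0$.

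\emph{Step 3 (case $a<r$).} Here the optimal choice $t^2=r/a$ would exceed $1$, so I would take $t=1$. Combining $\sum_{k\ge1}|a_k|^2\le 1-a^2$ with $\sum_{k\ge1}r^{2k}=\frac{r^2}{1-r^2}$ gives $r\frac{\sqrt{1-a^2}}{\sqrt{1-r^2}}$.

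The main obstacle is Step 1, the weighted square-sum bound at radius $t<1$. Parseval alone only gives $1-a^2$, which is too weak for the sharp constant in the case $a\ge r$. The key is to invoke Littlewood's subordination inequality for $L^2$ means and to check that the common constant term cancels. The remaining work is elementary optimization of $t$: the choice $t^2=r/a$ is exactly what makes Cauchy--Schwarz an equality for $f=\phi_a$.
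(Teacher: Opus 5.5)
The paper gives no proof of this lemma (it is quoted from \cite{Kayumov-aaponnusamy-CMFT-2017}). Your argument is correct and is the usual proof of that result: Cauchy--Schwarz with weight $t^2=r/|a_0|$ when $|a_0|\ge r$ (and $t=1$ otherwise), combined with the weighted square-sum bound, which is exactly the paper's Lemma~\ref{lem1.3} with $r^p=t^2$. Your Littlewood-subordination derivation of that bound is a valid self-contained substitute for citing Lemma~\ref{lem1.3}: the constant terms $|a_0|^2$ cancel in Parseval, and the bound reduces to $1-|a_0|^2$ at $t=1$, which covers the borderline case $|a_0|=r$.
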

\begin{lem}\label{lem1.2}{\em\cite{Graham-Kohr-2003}}
	For any $f\in\mathcal{B}(\mathbb{D})$, we have \beas |a_k|\leq 1-|a_0|^2\;\;\text{for all}\;\; k=1,2,3,\ldots\eeas
\end{lem}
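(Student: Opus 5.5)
The plan is to reduce the statement for general $k$ to the case $k=1$, which is the Schwarz--Pick inequality at the origin. This averaging over roots of unity is the classical argument of F.~Wiener.

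First dispose of the degenerate case. If $|a_0|=1$, then $|f(0)|=1$ is an interior maximum of $|f|$, so $f$ is constant by the maximum modulus principle. Hence $a_k=0$ for $k\ge 1$ and the inequality is trivial. (For $f\in\mathcal{B}(\mathbb{D})$ with strict inequality $|f|<1$ this case does not even arise.) So assume $|a_0|<1$.

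For the case $k=1$, compose $f$ with the disc automorphism $\phi(w)=\frac{w-a_0}{1-\overline{a_0}w}$. The function $g=\phi\circ f$ maps $\mathbb{D}$ into $\overline{\mathbb{D}}$, and in fact into $\mathbb{D}$ unless $f$ is constant. It also satisfies $g(0)=0$. The Schwarz lemma gives $|g'(0)|\le 1$. Since
\[
g'(0)=\frac{f'(0)}{1-|a_0|^2},
\]
we obtain $|a_1|\le 1-|a_0|^2$.

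For general $k\ge 2$, let $\omega=e^{2\pi i/k}$ and set
\[
F(z)=\frac1k\sum_{j=0}^{k-1} f(\omega^j z).
\]
Orthogonality of the characters $j\mapsto\omega^{jm}$ kills every coefficient except those with index divisible by $k$, so
\[
F(z)=\sum_{m\ge 0} a_{km}z^{km}.
\]
Moreover $|F(z)|\le 1$, since $F$ is an average of values of $f$. Because $F$ depends only on $z^k$, we may write $F(z)=h(z^k)$ with
\[
h(w)=a_0+a_kw+a_{2k}w^2+\cdots .
\]
The series for $h$ converges on $\mathbb{D}$, since $w=z^k$ ranges over all of $\mathbb{D}$, and $|h|\le 1$ there. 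Applying the case $k=1$ to $h$ gives $|a_k|=|h'(0)|\le 1-|h(0)|^2=1-|a_0|^2$, as required.

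The only point needing care is verifying that $h$ is a genuine analytic self-map of the disc, so that the case $k=1$ applies to it. Analyticity follows from the convergence of its power series for $|w|<1$. The bound $|h|\le 1$ follows from the surjectivity of $z\mapsto z^k$ onto $\mathbb{D}$ together with the averaging bound on $F$. Beyond this, the argument is routine.
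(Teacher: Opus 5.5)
Your proof is correct and complete. The Schwarz--Pick computation $g'(0)=a_1/(1-|a_0|^2)$ settles $k=1$. The roots-of-unity average $F(z)=\frac1k\sum_{j=0}^{k-1}f(\omega^jz)=h(z^k)$, with $h(w)=a_0+a_kw+a_{2k}w^2+\cdots$ an analytic self-map of $\mathbb{D}$, reduces every $k\ge 2$ to that case. You justify the convergence of $h$ on $\mathbb{D}$ and the bound $|h|\le 1$ adequately.

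The paper gives no proof of this lemma at all; it is simply quoted from Graham--Kohr. So there is no competing argument to compare against. What you have written is the classical Wiener argument that the citation stands in for. Your treatment of the case $|a_0|=1$ is a sensible addition, since Theorem~\ref{t1.1} is stated under the weaker hypothesis $|f|\le 1$.
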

\begin{lem}\label{lem1.3}{\em\cite{Kayumov-aaponnusamy-CMFT-2017}}
	Let $|a_0|<1$ and $0<r\leq1$. If $f\in\mathcal{B}(\mathbb{D})$, then the following sharp inequality holds: $$\sum_{j=1}^{\infty}|a_j|^2r^{pj}\leq\frac{r^p(1-|a_0|^2)^2}{1-|a_0|^2r^p}.$$
\end{lem}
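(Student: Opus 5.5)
The plan is to turn the left-hand side into an $L^2$ integral mean and then bound that integral mean by subordination. Set $\rho=r^{p/2}\in(0,1]$, so that $r^{pj}=\rho^{2j}$. For $\rho<1$, Parseval's identity gives
\[
\sum_{j=1}^{\infty}|a_j|^2\rho^{2j}=\frac{1}{2\pi}\int_0^{2\pi}\bigl|f(\rho e^{it})-a_0\bigr|^2\,dt .
\]
So it suffices to bound the integral mean of $|f-a_0|^2$ on the circle $|z|=\rho$.

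The key step is a structural description of $f-a_0$. Assume $f$ is not constant; if it is, the left-hand side is $0$. Since $|a_0|<1$, the function
\[
g=\frac{f-a_0}{1-\overline{a_0}f}
\]
is analytic in $\mathbb D$, satisfies $|g|\le 1$, and has $g(0)=0$. Inverting this relation gives $f=(a_0+g)/(1+\overline{a_0}g)$, and hence
\[
f(z)-a_0=(1-|a_0|^2)\,h(g(z)),\qquad h(w)=\frac{w}{1+\overline{a_0}w}=\sum_{k=1}^{\infty}(-\overline{a_0})^{k-1}w^k .
\]
Thus $(f-a_0)/(1-|a_0|^2)$ is subordinate to $h$. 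Littlewood's subordination theorem then gives
\[
\int_0^{2\pi}|h(g(\rho e^{it}))|^2dt\le\int_0^{2\pi}|h(\rho e^{it})|^2dt .
\]
The right-hand side can be computed by Parseval from the series of $h$:
\[
\frac{1}{2\pi}\int_0^{2\pi}|h(\rho e^{it})|^2\,dt=\sum_{k\ge1}|a_0|^{2(k-1)}\rho^{2k}=\frac{\rho^2}{1-|a_0|^2\rho^2}.
\]
Combining these pieces yields
\[
\sum_{j\ge1}|a_j|^2\rho^{2j}\le\frac{(1-|a_0|^2)^2\rho^2}{1-|a_0|^2\rho^2}.
\]
Substituting $\rho^2=r^p$ gives the claimed inequality.

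The main point to handle with care is the use of Littlewood's theorem in its $L^2$ (integral means) form rather than only for $h$ univalent. It holds for any analytic $h$, since $|h|^2$ is subharmonic. One must also confirm that $g$ maps into the disc; if $|f|\le1$ allows $|g|=1$ somewhere, then $g$ is a unimodular constant, which is impossible because $g(0)=0$, unless $f$ is constant.

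The endpoint $r=1$ (that is, $\rho=1$) follows by letting $\rho\to1^-$, since both sides are monotone and continuous in $\rho$. For sharpness, take $f=\phi_a$ from \eqref{e1.4a}, or more generally $f(z)=(a_0-z)/(1-\overline{a_0}z)$, so that $g(z)=-z$. In that case subordination becomes equality, since $h(-z)$ has coefficients of the same moduli as those of $h$. Then $|a_j|=(1-|a_0|^2)|a_0|^{j-1}$, and the inequality holds with equality.
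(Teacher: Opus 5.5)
The paper does not prove this lemma. It quotes it from \cite{Kayumov-aaponnusamy-CMFT-2017} and uses it only as a black box in the proofs of Theorems~\ref{t1.3} and~\ref{t5}, so there is no internal argument to compare yours against.

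Your proof is correct and complete. It is the natural subordination argument: Parseval's identity combined with Littlewood's subordination principle for $f\prec (a_0+z)/(1+\overline{a_0}z)$, which is exactly what your factorization $f-a_0=(1-|a_0|^2)\,h\circ g$ encodes. You also correctly note that Littlewood's principle needs no univalence of $h$. Your extremal function $(a_0-z)/(1-\overline{a_0}z)$ gives equality for every $a_0$ and every $r$, which is a stronger form of sharpness than the statement requires.

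Two points are worth making explicit:
\begin{itemize}
\item The substitution $\rho=r^{p/2}$ presupposes $p>0$, which the statement leaves unspecified.
\item Under the paper's definition of $\mathcal{B}(\mathbb{D})$, which requires the strict inequality $|f|<1$, your discussion of the case $|g|=1$ is unnecessary, though harmless.
\end{itemize}
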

\section{Proof of the Theorems}
\begin{proof}[\bf{Proof of Theorem \ref{t1.1}}]
	As left hand side of Eq. \eqref{e1.3} monotonic increasing function of $r$, it is sufficient to prove the inequality \eqref{e1.3} for $r=1/3$.
	\par First suppose that $|a_0|\geq r=1/3$. 
	\par As the left-hand side of Eq.~\eqref{e1.3} is a monotonic increasing function of $r$, in view of Lemma \ref{lem1.1} and the fact that \bea\label{e1.4} \frac{S_r}{\pi}=\sum_{k=1}^{\infty}k|a_k|^2r^{2k}\leq\frac{(1-|a_0|^2)^2r^2}{(1-r^2)^2},\eea it follows from the L.H.S. of \eqref{e1.3} that \beas \sum_{k=0}^{\infty} |a_k|r^k+\lambda\left(\frac{S_r}{\pi}\right)\leq |a_0|+\frac{1-|a_0|^2}{3-|a_0|}+\frac{9\lambda(1-|a_0|^2)^2}{64}.\eeas
	The above expression will be less than or equal to $1$, if
	\beas && 64x(3-x)+64(1-x^2)+9\lambda (3-x)(1-x^2)^2-64(3-x)\\&&=(1-x)^2[9\lambda(3-x)(1+x)^2-128]\\&&\leq 0,\eeas where $x=|a_0|\geq1/3$.
	\par Thus it is enough to prove that $\phi(x)=9\lambda(3-x)(1+x)^2-128\leq 0$, where $x\in[\frac{1}{3},1)$.
	\begin{figure}[ht]
		\centering
		\begin{tikzpicture}[scale=1]
			
			\draw[->,thick] (0,0) -- (7.2,0) node[right] {$x$};
			\draw[->,thick] (0,-2.4) -- (0,2.2) node[above] {$\phi(x)$};
		
			\draw (1,0.08)--(1,-0.08);
			\node[below] at (1,-0.08) {$\frac13$};
			
			\draw (6,0.08)--(6,-0.08);
			\node[below] at (6,-0.08) {$1$};
			
			\draw[densely dashed,gray!60] (0,0)--(7,0);
			
			\draw[
			RoyalBlue,
			very thick,
			line cap=round
			]
			(1,-1.7)
			.. controls (2.3,-1.55) and (4.2,-0.95)
			.. (6,-0.35);
			
			\fill[red!8]
			(1,-1.7)
			.. controls (2.3,-1.55) and (4.2,-0.95)
			.. (6,-0.35)
			-- (6,0)
			-- (1,0)
			-- cycle;
			
			\draw[
			RoyalBlue,
			->,
			line width=1pt
			]
			(2.2,1.25)
			to[out=-15,in=150]
			(4.6,-0.8);
			
			\node[
			draw=RoyalBlue,
			rounded corners,
			fill=blue!8,
			font=\small
			]
			at (1.85,1.45)
			{$\phi'(x)>0$};
			
			\node[
			draw=red!70,
			rounded corners,
			fill=white,
			font=\small
			]
			at (3.5,-1.05)
			{$\phi(x)\le0$};
			
			\filldraw[RoyalBlue] (6,-0.35) circle (2pt);
			
			\node[right] at (6,-0.35)
			{$\phi(1)=72\lambda-128\le0$};
			
		\end{tikzpicture}
		
		\caption{Qualitative behaviour of $\phi(x)$ on
			$\left[\frac13,1\right)$.
		}
		\label{fig:phi-proof}
	\end{figure}

	\par Note that $\phi(x)$ is monotonic increasing in $[\frac{1}{3},1)$. Therefore, $$\phi(x)\leq\phi(1)=72\lambda-128\leq0,\;\; \text{if}\;\;\lambda\leq\frac{16}{9}.$$ 
	Next suppose that $|a_0|<r=\frac{1}{3}$. Then using Lemma \ref{lem1.1} and Eq. \eqref{e1.4} in the L.H.S. of \eqref{e1.3}, we obtain \beas \sum_{k=0}^{\infty} |a_k|r^k+\lambda\left(\frac{S_r}{\pi}\right)\leq |a_0|+\sqrt{\frac{1-|a_0|^2}{8}}+\frac{9\lambda(1-|a_0|^2)^2}{64}<\frac{1}{3}+\frac{1}{\sqrt{6}}+\frac{\lambda}{4}.\eeas
	Hence, the above expression will be less than or equal to $1$ if $\lambda\leq\frac{4(2\sqrt{2}-\sqrt{3})}{3\sqrt{2}}$, which is the condition $(b)$.
	
	\par Now to prove that the radius $1/3$ is sharp, we consider the function \bea\label{e1.6a} f_a(z)=\frac{a-z}{1-az}=a-(1-a^2)\sum_{k=1}^{\infty}a^{k-1}z^k,\eea where $a\in(0,1)$.
	\par Then we can easily deduce that \beas \sum_{k=0}^{\infty}|a_k|r^k+\lambda\left(\frac{S_r}{\pi}\right)&=&a+\frac{(1-a^2)r}{1-ar}+\frac{\lambda r^2(1-a^2)^2}{(1-a^2r^2)^2}\\&=&\frac{a(1-a^2r^2)^2+r(1-a^2)(1-ar)(1+ar)^2+\lambda r^2(1-a^2)^2}{(1-a^2r^2)^2}.\eeas 
	\par We need to prove that if $r>1/3$, then there exists an $a\in(0,1)$ such that the last expression is greater than $1$. This is equivalent to show that \beas (1-a)\left[(1-ar)(1+ar)^2(r+2ar-1)+\lambda(1-a)(1+a)^2\right]>0.\eeas  
	\par Let \beas \psi(a,\lambda,r)= (1-ar)(1+ar)^2(r+2ar-1)+\lambda(1-a)(1+a)^2.\eeas
	\par Thus, it is enough to show that if $r>1/3$, then there exists an $a\in(0,1)$ such that $\psi(a,\lambda,r)>0$.
	\par Note that $\lim_{a\rightarrow1^{-}}\psi(r)=(1-r)(1+r)^2(3r-1)>0$ for $r>1/3$. Thus $\psi(a)>0$ in a neighborhood of $1$. This completes the proof of sharpness.
\end{proof}

\begin{proof}[\bf{Proof of Theorem \ref{t6}}]
As $f(0)=a_0$, by the assumption of the theorem and using Schwarz–Pick lemma, we easily deduce that for $z=re^{i\theta}\in\mathbb{D}$, \bea\label{e3.3} |f(z)|\le\frac{r+|a_0|}{1+r|a_0|}.\eea 
Applying \eqref{e3.3} and the Lemma \ref{lem1.2} we obtain that \beas  |f(z^m)|+\lambda\sum_{k=1}^{\infty}|a_{nk}|r^{nk}&\leq&\frac{r^m+|a_0|}{1+|a_0|r^m}+\frac{\lambda r^{nN}(1-|a_0|^2)}{1-r^n}\\&=&\frac{(1-r^n)(|a_0|+r^m)+\lambda r^{nN}(1-|a_0|^2)(1+|a_0|r^m)}{(1-r^n)(1+|a_0|r^m)}.\eeas
\par We need to show that the last expression is less than or equal to $1$ for $r\leq R_{\lambda, m,n,N}$. This is equivalent to showing that \beas \phi(r)\leq0\;\;\text{for}\;\;r\leq R_{\lambda,m,n,N},\eeas where \beas \phi(r)=(1-|a_0|)\left[\lambda r^{nN}(1+|a_0|)(1+|a_0|r^m)-(1-r^n)(1-r^m)\right].\eeas Furthermore, \beas \phi(r)\leq (1-|a_0|)\left[2\lambda r^{nN}(1+r^m)-(1-r^n)(1-r^m)\right]:=(1-|a_0|)\psi(r),\eeas where \beas \psi(r)&=&2\lambda r^{nN}(1+r^m)-(1-r^n)(1-r^m)\\&=& 2\lambda r^{nN+m}+2\lambda r^{nN}-r^{n+m}+r^m+r^n-1=0.\eeas

\begin{figure}[ht]
	\centering
	
	\begin{tikzpicture}
		
		\begin{axis}[
			width=11.2cm,
			height=6.8cm,
			xmin=0,
			xmax=1.02,
			ymin=-1.2,
			ymax=9.5,
			axis lines=left,
			xlabel={$r$},
			ylabel={$\psi(r)$},
			xtick={0,0.28881,1},
			xticklabels={$0$,$R_{\lambda,m,n,N}$,$1$},
			ytick={-1,0},
			grid=major,
			major grid style={gray!18},
			tick style={black},
			axis line style={thick},
			clip=false,
			samples=250,
			domain=0:1,
			legend style={draw=none}
			]

			\addplot[
			draw=none,
			name path=A
			]
			{exp(2.4*x)-2};
			
			\addplot[
			draw=none,
			name path=B
			]
			{0};
			
			\addplot[
			red!10
			]
			fill between[
			of=A and B,
			soft clip={domain=0:0.28881}
			];

			\addplot[
			green!10
			]
			fill between[
			of=A and B,
			soft clip={domain=0.28881:1}
			];

			\addplot[
			RoyalBlue,
			very thick
			]
			{exp(2.4*x)-2};

			\addplot[
			only marks,
			RoyalBlue,
			mark=*,
			mark size=2.7pt
			]
			coordinates{
				(0.28881,0)
			};
			
			\draw[dashed,gray]
			(axis cs:0.28881,-1.2)
			--
			(axis cs:0.28881,0);

			\node[left]
			at (axis cs:0,-1)
			{$\psi(0)=-1$};
			
			\node[right]
			at (axis cs:1,9.02)
			{$\psi(1)>0$};

			\node[
			fill=white,
			rounded corners,
			draw=red!70,
			font=\small
			]
			at (axis cs:0.13,0.95)
			{$\psi(r)<0$};
			
			\node[
			fill=white,
			rounded corners,
			draw=green!60!black,
			font=\small
			]
			at (axis cs:0.70,1.10)
			{$\psi(r)>0$};

			\node[
			draw=RoyalBlue,
			fill=cyan!8,
			rounded corners,
			font=\small
			]
			(M)
			at (axis cs:0.17,6.8)
			{$\psi'(r)>0$};
			
			\draw[
			RoyalBlue,
			->,
			thick
			]
			(M.east)
			to[out=-10,in=160]
			(axis cs:0.56,4.8);

			\node[
			draw=RoyalBlue,
			fill=blue!5,
			rounded corners,
			font=\small
			]
			(C)
			at (axis cs:0.83,8.2)
			{$\psi(r)$};
			
			\draw[
			RoyalBlue,
			->,
			thick
			]
			(C.west)
			to[out=180,in=35]
			(axis cs:0.74,4.7);
			
		\end{axis}
		
	\end{tikzpicture}
	
	\caption{Schematic graph of the auxiliary function $\psi(r)$ with unique positive zero $R_{\lambda,m,n,N}$.}
	
	\label{fig:psi}
\end{figure}
\par A direct computation shows that $\psi(r)$ in monotonically increasing in $r\in[0,1)$, $\psi(0)=-1<0$ and $\psi(1)=4\lambda>0$. Therefore, there is a unique positive root, say $R_{\lambda,m,n,N}$ of the equation \beas 2\lambda r^{nN+m}+2\lambda r^{nN}-r^{n+m}+r^m+r^n-1=0\eeas such that $\psi(r)\leq0$ for all $r\leq R_{\lambda,m,n,N}$, and hence $\phi(r)\leq0$ for all $r\leq R_{\lambda,m,n,N}$.
\par In order to prove the sharpness, we consider the same function as in \eqref{e1.4a} with $a\in[0,1)$. For this function, taking $m$ odd and $z=-r$, we easily obtain that \beas |f(-r)|+\lambda\sum_{k=N}^{\infty}|a_{nk}|r^{nk}&=&\frac{a+r^m}{1+ar^m}+\frac{\lambda a^{nN-1}r^{nN}(1-a^2)}{1-a^nr^n}\\&=& \frac{(1-a^nr^n)(a+r^m)+\lambda a^{nN-1}r^{nN}(1-a^2)(1+ar^m)}{(1+ar^m)(1-a^nr^n)}.\eeas
\par Now, It remains to prove that if $r>R_{\lambda,m,n,N}$, there exists an $a\in[0,1)$ such that the last expression is greater than $1$. This is equivalent to showing that \beas (1-a)\left[\lambda a^{nN-1}r^{nN}(1+a)(1+ar^m)-(1-a^nr^m)(1-r^m)\right]>0.\eeas
\par Thus it is enough to show that \beas P(\lambda, a, r):= \lambda a^{nN-1}r^{nN}(1+a)(1+ar^m)-(1-a^nr^m)(1-r^m)>0,\;\;\text{if}\;\;r>R_{\lambda,m,n,N}.\eeas
\par Note that \beas \lim\limits_{a\rightarrow 1^{-}}P(\lambda,a,r)=2\lambda r^{nN}(1+r^m)-(1-r^n)(1-r^m)>0,\;\;\text{when}\;\;r>R_{\lambda,m,n,N}.\eeas
\par Hence, by continuity of $P(\lambda,a,r)$, there exist $a\in(0,1)$ such that $P(\lambda,a,r)>0$ for $r>R_{\lambda,m,n,N}$. For $m$ even, the sharpness of the result can be shown similarly by taking the function $f(z)=\frac{a+z}{1+az}$, where $a\in(0,1)$ and $z\in\mathbb{D}$.
\par Next we verify the second part. A simple computation yields that \beas  |f(z^m)|^2+\lambda\sum_{k=1}^{\infty}|a_{nk}|r^{nk}&\leq&\left(\frac{r^m+|a_0|}{1+|a_0|r^m}\right)^2+\frac{\lambda r^{nN}(1-|a_0|^2)}{1-r^n}\\&=&\frac{(1-r^n)(|a_0|+r^m)^2+\lambda r^{nN}(1-|a_0|^2)(1+|a_0|r^m)^2}{(1-r^n)(1+|a_0|r^m)^2}.\eeas
\par A straightforward calculation implies that the above term is less than or equal to $1$ if \beas (1-|a_0|^2)\phi(r)\leq 0,\eeas where \beas \phi(r)=\lambda r^{nN}(1+|a_0|r^m)^2-(1-r^n)(1-r^{2m}).\eeas
Thus, it is enough to show that $\phi(r)\leq0$ for $r\leq R_{\lambda,m,n,N}^{\prime}$. Notice that \beas\phi(r)\leq(1+r^m)\left[\lambda r^{nN}(1+r^m)-(1-r^n)(1-r^m)\right]:=(1+r^m)\psi(r),\eeas where \beas\psi(r)=\lambda r^{nN}(1+r^m)-(1-r^n)(1-r^m).\eeas
\par Therefore, it is sufficient to prove that $\psi(r)\leq0$ for $r\leq R_{\lambda,m,n,N}^{\prime}$. Since $\psi(r)$ is monotonic increasing for $r\in[0,1)$, $\psi(0)=-1$ and $\psi(1)>0$. Therefore, we must have $\psi(r)\leq0$ for $r\leq R_{\lambda,m,n,N}^{\prime}$, where $R_{\lambda,m,n,N}^{\prime}$ is the positive root of the equation \beas\lambda r^{nN}(1+r^m)-(1-r^n)(1-r^m)=0.\eeas. 
\par The sharpness can be established in a manner similar to that in part one; hence, the details are omitted.
\end{proof}


\begin{proof}[\bf{Proof of Theorem \ref{t1.3}}]
By similar arguments as in Theorem \ref{t1.1}, it is enough to prove the inequality \eqref{e1.8} for $r=1/3$. First suppose that $|a_0|\geq r=1/3$.
\par Now in view of Lemmas \ref{lem1.1} and \ref{lem1.3}, it follows that \beas && \sum_{j=0}^{\infty}|a_j|r^j+\lambda\sum_{j=1}^{\infty}|a_j|^2r^j\\&&\leq|a_0|+r\frac{1-|a_0|^2}{1-r|a_0|}+\frac{\lambda  r (1-|a_0|^2)^2}{1-r|a_0|^2}\\&&= |a_0|+\frac{1-|a_0|^2}{3-|a_0|}+\frac{\lambda (1-|a_0|^2)^2}{3-|a_0|^2}\\&&=\frac{x(3-x)(3-x^2)+(1-x^2)(3-x^2)+\lambda(1-x^2)^2(3-x)}{(3-x)(3-x^2)},\eeas where $x=|a_0|\in[\frac{1}{3},1)$.
\par After some straightforward calculations, the above expression is less than or equal to $1$ if \beas\label{e1.8a} (1-x)^2\phi(x)\leq0,\eeas where \bea \phi(x)=-\lambda x^3+(\lambda+2)x^2+5\lambda x+3\lambda-6,\eea which is equivalent to showing that $\phi(x)\leq0$.
\par Now differentiating $\phi(x)$ three times, we have \beas\begin{cases}
\phi^{\prime}(x)=-3\lambda x^2+2(\lambda+2)x+5\lambda,\\\phi^{\prime\prime}(x)=-6\lambda x+2(\lambda+2),\\\phi^{\prime\prime\prime}(x)=-6\lambda\leq0.\end{cases}\eeas
\par This implies that $\phi^{\prime\prime}(x)\geq\phi^{\prime\prime}(1)=-4\lambda+4\geq0$, if $\lambda\leq1$. Therefore, for $\lambda\leq1$, we have $\phi^{\prime}(x)\geq\phi^{\prime}(\frac{1}{3})=\frac{16\lambda+4}{3}>0$. This yields that $\phi(x)\leq\phi(1)=8\lambda-4\leq0$, if $\lambda\leq\frac{1}{2}$.

\par Next suppose that $|a_0|<r=\frac{1}{3}$. Then using Lemmas \ref{lem1.1} and \ref{lem1.3}, we obtain that \beas&&\sum_{j=0}^{\infty}|a_j|r^j+\lambda\sum_{j=1}^{\infty}|a_j|^2r^j\\&&\leq|a_0|+r\frac{\sqrt{1-|a_0|^2}}{\sqrt{1-r^2}}+\frac{\lambda r(1-|a_0|^2)^2}{1-r|a_0|^2}\\&& =|a_0|+\frac{\sqrt{1-|a_0|^2}}{\sqrt{8}}+\frac{\lambda(1-|a_0|^2)^2}{3-|a_0|^2}\\&&<\frac{1}{3}+\frac{1}{\sqrt{8}}+\frac{\lambda}{3-|a_0|^2}.\eeas
\par After a straightforward calculation, it is easy to see that the last expression is less than or equal to $1$ if \beas \psi(x)= (4\sqrt{2}-3)x^2-3(4\sqrt{2}-3)+6\sqrt{2}\lambda\leq0,\eeas where $x=|a_0|$. It follows that $\psi(x)$ is monotonic decreasing in $x\in[0,\frac{1}{3})$. Therefore, we must have $\psi(x)\leq\psi(0)=9-12\sqrt{2}+6\sqrt{2}\lambda\leq0$, if $\lambda\leq\frac{4\sqrt{2-3}}{2\sqrt{2}}=\frac{8-3\sqrt{2}}{4}$. 

\par Now to prove the sharpness, we consider the same function as defined in \eqref{e1.6a}. Then after simple computations, we obtain \beas \sum_{j=0}^{\infty}|a_j|r^j+\lambda\sum_{j=1}^{\infty}|a_j|^2r^j=a+\frac{r(1-a^2)}{1-ar}+\frac{\lambda r(1-a^2)^2}{1-a^2r}.\eeas
\par It suffices to show that for $r > \frac{1}{3}$, there exists some $a \in (0,1)$ such that the above expression exceeds $1$. This is equivalent to showing that \beas \frac{(1-a)\psi(a,r,\lambda)}{(1-ar)(1-a^2r)}>0,\eeas where \beas \psi(a,r,\lambda)=\lambda r(1-a)(1+a)^2+(1+a)r-(1-ar)(1-a^2r).\eeas
\par Thus, it is enough to demonstrate that for $r>1/3$, then there exists an $a\in(0,1)$ such that $\psi(a,r,\lambda)>0$.
\par Since $r>\frac{1}{3}$, we observe that $$\lim_{a\rightarrow 1^{-}}\psi(a,r,\lambda)=-(r^2-4r+1)=-(r-2+\sqrt{3})(r-2-\sqrt{3})>0.$$ Thus, in a neighborhood of $1$, it follows that $\psi(a,r,\lambda)>0$. This completes the proof of the theorem.
\end{proof}
\begin{proof}[\bf{Proof of Theorem \ref{t1.6}}]
	In view of the Lemma \ref{lem1.2}, it follows that \beas |a_0|+\lambda\sum_{k=1}^{\infty}|a_{nk}|r^{nk}\leq\frac{|a_0|(1-r^n)+\lambda(1-|a_0|^2)r^n}{(1-r^n)}.\eeas
	The above expression is less than or equal to $1$ if $(1-|a_0|)\phi(r)\leq0$, where $\phi(r)=\lambda(1+|a_0|)r^n-(1-r^n)$. Since, $\phi(r)\leq (2\lambda+1)r^n-1:=\psi(r)$, (say); it is enough to show that $\psi(r)\leq0$ for $r\leq R_{\lambda,n}$.
	\par Note that $\psi(r)$ is monotonic increasing in $r\in(0,1)$, $\psi(0)=-1<0$ and $\psi(1)=2\lambda>0$. Therefore, there exists a positive root $R_{\lambda,n}$ in $(0,1)$ of the equation $\psi(r)=0$, and $\psi(r)\leq0$ for all $r\leq R_{\lambda,n}$.
	\par Now in order to prove the sharpness, we consider the same function as in \eqref{e1.6a}. After simple calculations, we have \beas |a_0|+\lambda\sum_{k=1}^{\infty}|a_{nk}|r^{nk}=a+\frac{\lambda(1-a^2)a^{n-1}r^n}{1-a^nr^n}.\eeas
	\par We need to show that if $r>R_{\lambda,n}$, then there exists an $a\in(0,1)$ such that the above expression is greater than $1$. This is equivalent to showing that \beas (1-a)\left[\lambda(1+a)a^{n-1}r^{n}-(1-a^nr^n)\right]>0.\eeas
	\par Therefore, our task reduces to showing that one can find an $a\in(0,1)$ for which $Q(a,\lambda,r):=\lambda(1+a)a^{n-1}r^{n}-(1-a^nr^n)>0$ whenever $r>R_{\lambda}$. Note that $Q(a,\lambda,r)$ is continuous in $a$, and \beas \lim\limits_{a\rightarrow 1^{-}}Q(a,\lambda,r)=(2\lambda+1)r^n-1>0\;\;\text{for}\;\;r>R_{\lambda,n}.\eeas
	Therefore, by continuity of $Q(a,\lambda,r)$, we have $Q(a,\lambda,r)>0$ for some $a\in(0,1)$.
\par For the second part, by applying Lemma \ref{lem1.2}, we obtain \beas |a_0|^2+\lambda\sum_{k=1}^{\infty}|a_{nk}|r^{nk}=\frac{|a_0|^2(1-r^n)+\lambda r^n(1-|a_0|^2)}{1-r^n}.\eeas
Then, by arguments similar to those used in the first part, we readily obtain the desired conclusion. Hence, we omit the details.	
\end{proof}
\begin{proof}[\bf{Proof of Theorem \ref{t1.7}}]
In view of Lemma \ref{lem1.2}, we obtain \bea\label{e3.5} M_{f}(r)&:=&|a_0|+\lambda\sum_{k=1}^{\infty}|a_{2k}|r^{2k}-\lambda\sum_{k=1}^{\infty}|a_{2k-1}|r^{2k-1}\nonumber\\&\leq& |a_0|+\lambda\sum_{k=1}^{\infty}|a_{2k}|r^{2k}\nonumber\\&\leq& |a_0|+\frac{\lambda r^2(1-|a_0|^2)}{1-r^2}.\eea	
\par The last term is less than or equal to $1$ if \beas (1-|a_0|)\phi(r)\leq 0,\eeas where $\phi(r)=(1+|a_0|)\lambda r^2-(1-r^2)$. Thus, it is enough to prove that $\phi(r)\leq0$ for $r\leq\frac{1}{\sqrt{2\lambda+1}}$. Hence \beas \phi(r)\leq(2\lambda+1)r^2-1\leq0,\;\;\text{if}\;\;r\leq\frac{1}{\sqrt{2\lambda+1}}.\eeas
\par For deriving the lower bound of $M_f(r)$, we make use of the following chain of relations. \beas  M_f(r)&:=&|a_0|+\lambda\sum_{k=1}^{\infty}|a_{2k}|r^{2k}-\lambda\sum_{k=1}^{\infty}|a_{2k-1}|r^{2k-1}\\&\geq&-\lambda\sum_{k=1}^{\infty}|a_{2k-1}|r^{2k-1}\geq-|a_0|-\sum_{k=1}^{\infty}|a_{2k-1}|r^{2k-1}\\&\geq&-\left(|a_0|+\sum_{k=1}^{\infty}|a_{2k-1}|r^{2k}\right)\\&\geq&-\left(|a_0|+\frac{\lambda(1-|a_0|^2)r^2)}{1-r^2}\right).\eeas
\par Combining this with \eqref{e3.5}, we conclude that $M_f(r)\geq-1$ for $r\leq\frac{1}{\sqrt{2\lambda+1}}$.
\par In order to establish the sharpness, we consider the following function \beas f(z)=\frac{z^2-a}{1-az^2}=-a+(1-a^2)\sum_{k=0}^{\infty}a^{k}z^{2k+2},\eeas where $a\in(0,1)$. 
\par A straightforward calculation yields that \beas M_f(r)&:=&|a_0|+\lambda\sum_{k=1}^{\infty}|a_{2k}|r^{2k}-\lambda\sum_{k=1}^{\infty}|a_{2k-1}|r^{2k-1}\\&=&a+\lambda(1-a^2)\sum_{k=1}^{\infty}a^{k-1}r^{2k}=a+\frac{\lambda(1-a^2)r^2}{1-ar^2}.\eeas
\par Now, it is sufficient to prove that if $r>\frac{1}{\sqrt{2\lambda+1}}$, there exists $a\in(0,1)$ such that the above expression is greater than $1$, which is equivalent to showing that \beas (1-a)[\lambda r^2(1+a)-(1-ar^2)]>0.\eeas
\par As $a\in(0,1)$, it is enough to prove that $P(a,\lambda,r):=\lambda r^2(1+a)-(1-ar^2)>0$.
\par We observe that $\lim\limits_{a\rightarrow 1^{-}}P(a,\lambda,r)=(2\lambda+1)r^2-1>0$ for $r>\frac{1}{\sqrt{2\lambda+1}}$. 
\par Thus, by continuity of $P(a,\lambda,r)>0$ in a neighborhood of $1$. 

\begin{figure}[H]
	\centering
	\begin{tikzpicture}
		\begin{axis}[
			width=10cm, height=6.3cm,
			xlabel={$\lambda$}, ylabel={$R_\lambda$},
			xmin=0, xmax=4, ymin=0, ymax=1.05,
			domain=0:4, samples=200,
			axis lines=left,
			axis line style={thick, black!70},
			tick label style={font=\small},
			label style={font=\small},
			grid=major, grid style={gray!12},
			minor grid style={gray!6},
			axis background/.style={fill=blue!2},
			]
			
			\addplot[
			name path=curve,
			draw=none,
			domain=0:4,
			] {1/sqrt(2*x+1)};
			\addplot[
			name path=axisbottom,
			draw=none,
			domain=0:4,
			] {0};
			\addplot[
			top color=RoyalBlue!35,
			bottom color=RoyalBlue!2,
			]
			fill between[of=curve and axisbottom];
			
			\addplot[RoyalBlue, very thick, mark=none, domain=0:4]
			{1/sqrt(2*x+1)};
			
			\addplot[
			only marks,
			mark=*,
			mark size=2.2pt,
			mark options={fill=Maroon, draw=black, line width=0.4pt}
			] coordinates {(0,1)};
			
			\node[
			font=\small,
			fill=white,
			fill opacity=0.85,
			text opacity=1,
			inner sep=2pt,
			anchor=south west
			] at (axis cs:0.15,0.95) {$R_0=1$};
			
			\draw[gray!60, densely dotted, thick] (axis cs:0,0) -- (axis cs:4,0);
			\node[
			gray!70!black,
			font=\small,
			fill=white,
			fill opacity=0.85,
			text opacity=1,
			inner sep=2pt
			] at (axis cs:3.15,0.13) {$R_\lambda\to0$ as $\lambda\to\infty$};
			
		\end{axis}
	\end{tikzpicture}
	\caption{The sharp Bohr radius $R_\lambda=1/\sqrt{2\lambda+1}$ of Theorem \ref{t1.7} as a function of $\lambda>0$.}
	\label{fig:radius-t1.7}
\end{figure}
Hence, the theorem is proved.
\end{proof}

\begin{proof}[\bf{Proof of Theorem \ref{t1.4}}]
As the L.H.S. of Eq. \eqref{e1.9} is monotonic increasing in $r$, we prove that the inequality holds for $r=\frac{1}{3}$.
\par First suppose that $|a_0|\geq r=\frac{1}{3}$. Then using Lemma \ref{lem1.1}, it follows that \beas&& |f(z)-a_0|^2+|a_0|+\lambda\sum_{k=1}^{\infty}|a_k|r^k\\&&\leq|a_0|+\lambda \frac{1-|a_0|^2}{3-|a_0|}+\left(\frac{1-|a_0|^2}{3-|a_0|}\right)^2\\&&=\frac{x(3-x)^2+\lambda(3-x)(1-x^2)+(1-x^2)^2}{(3-x)^2},\eeas where $|a_0|=x\geq\frac{1}{3}$.
\par A straightforward calculation shows that the last expression is less than or equal to $1$, if \beas (1-x)\phi(x)\leq0,\eeas where $\phi(x)=-x^3-(\lambda+2)x^2+(2\lambda+7)x+3\lambda-8$. Thus it is enough to show that $\phi(x)\leq0$ for $x\in[1/3,1)$.
\par Differentiating twice, we obtain \beas \phi^{\prime}(x)=-3x^2-2(\lambda+2)x+2\lambda+7,\eeas
\beas\phi^{\prime\prime}(x)=-6x-2(\lambda+2)<0.\eeas

\par This implies that $\phi^{\prime}(x)\geq\phi^{\prime}(1)=0$. Hence $\phi^{\prime}(x)\leq\phi^{\prime}(1)=4\lambda-4\leq0$ when $\lambda\leq1$. 
\par Next suppose that $|a_0|<r=\frac{1}{3}$. Then in view of Lemma \ref{lem1.1}, we obtain \beas |f(z)-a_0|^2+|a_0|+\lambda\sum_{k=1}^{\infty}|a_k|r^k<|a_0|+\frac{\lambda\sqrt{1-|a_0|^2}}{\sqrt{8}}+\frac{1-|a_0|^2}{8}<\frac{1}{3}+\frac{\lambda}{\sqrt{8}}+\frac{1}{8}.\eeas                                                                , 
\par The above expression is less than or equal to $1$, if $\lambda\leq\frac{13\sqrt{2}}{24}$. This completes the proof.
\end{proof}

\begin{proof}[\bf{Proof of Theorem \ref{t5}}]
As $f(z)=\sum_{k=0}^{\infty}a_kz^k$ is analytic in $\mathbb{D}$ with $|f(z)|<1$ for all $z\in\mathbb{D}$, and $f(0)=a_0$, in view of the inequality \eqref{e3.3} and Lemma \ref{lem1.3}, we obtain \beas && \lambda|f(z)|^2+(1-\lambda)\sum_{k=1}^{\infty}|a_k|^2r^{2k}\\&&\leq \lambda\left(\frac{r+|a_0|}{1+r|a_0|}\right)^2+(1-\lambda)\frac{r^2(1-|a_0|^2)^2}{1-r^2|a_0|^2}\\&&=\frac{\lambda(r+|a_0|)^2(1-r|a_0|)+(1-\lambda)r^2(1-|a_0|^2)^2(1+r|a_0|)}{(1+r|a_0|)^2(1-r|a_0|)}.\eeas
\par A straightforward calculations easily shows that the above expression will be less than or equal to $1$, if $$\phi(x)\leq0,$$ where \beas \phi(x)&=&(1-\lambda)r^3x^5+(1-\lambda)r^2x^4+(2\lambda r^3-r^3-\lambda r)x^3+(2\lambda r^2-r^2+\lambda)x^2\\&+&[2\lambda r+(1-2\lambda)r^3-r]x+r^2-1,\eeas and $x=|a_0|\in[0,1)$.
\par Differentiating $\phi(x)$ four times, we have \beas \phi^{\prime}(x)&=&5(1-\lambda)r^3x^4+4(1-\lambda)r^2x^3+3(2\lambda r^3-r^3-\lambda r)x^2+2(2\lambda r^2-r^2+\lambda)x\\&+&2\lambda r+(1-2\lambda)r^3-r,\eeas
\beas \phi^{\prime\prime}(x)&=&20(1-\lambda)r^3x^3+12(1-\lambda)r^2x^2+6(2\lambda r^3-r^3-\lambda r)x+2(2\lambda r^2-r^2+\lambda),\eeas
\beas \phi^{\prime\prime\prime}(x)&=&60(1-\lambda)r^3x^2+24(1-\lambda)r^2x+6(2\lambda r^3-r^3-\lambda r),\eeas
\beas \phi^{(iv)}(x)&=&120(1-\lambda)r^3x+24(1-\lambda)r^2\geq0.\eeas
\par Therefore, we have $\phi^{\prime\prime\prime}(x)\leq\phi^{\prime\prime\prime}(1)=6r\psi(r)$, where \beas \psi(r)=(9-8\lambda)r^2+4(1-\lambda)r-\lambda.\eeas
\par We observe that $\psi(r)$ is monotonic increasing function in $[0,1)$. The zeros of $\psi(r)$ are given by \beas r_-=\frac{-2(1-\lambda)-\sqrt{4+\lambda-4\lambda^2}}{9-8\lambda}\;\;\text{and}\;\;r_+=\frac{-2(1-\lambda)+\sqrt{4+\lambda-4\lambda^2}}{9-8\lambda}.\eeas
\par Since $0<\lambda<1$, after simple computation, it follows that $r_-<0$ and $0<r_+<1$.
\par As $\psi(0)=-\lambda$, $\psi(r)$ is monotonic increasing in $[0,1)$ and $\psi(r_+)=0$, it follows that \beas \psi(r)\leq0\;\;\text{for each}\;\;r\in[0,r_+],\eeas and hence we must have $\phi^{\prime\prime\prime}(x)\leq0$ for each $x\in[0,1)$. Thus $\phi^{\prime\prime}(x)$ is monotonic decreasing in $[0,1)$. Therefore, \beas \phi^{\prime\prime}(x)\leq\phi^{\prime\prime}(0)=2(\lambda-r^2+2\lambda r^2)\leq0,\eeas whenever $r\leq\sqrt{\frac{\lambda}{1-2\lambda}}$, for each $0<\lambda<\frac{1}{2}$. Therefore, under this condition, we see that $\phi^{\prime}(x)$ is monotonic decreasing in $[0,1)$. This implies that $\phi^{\prime}(x)\leq\phi^{\prime}(0)=r(1-2\lambda)(r^2-1)\leq0$. This shows that $\phi(x)$ is monotonic decreasing in $[0,1)$, and hence $\phi(x)\leq\phi(0)=r^2-1\leq0$. Thus $\phi(x)\leq0$ for each $x\in[0,1)$.
\par We also note that $\sqrt{\frac{\lambda}{1-2\lambda}}\leq1$, whenever $\lambda\leq\frac{1}{3}$. Hence the Bohr radius $R_{\lambda}$ is given by \beas R_{\lambda}=\begin{cases} r_+,\;\;\text{when}\;\;\frac{1}{3}<\lambda\leq\frac{1}{2},\vspace{1.2mm}\\\text{min}\left\{\sqrt{\frac{\lambda}{1-2\lambda}},r_+\right\},\;\;\text{when}\;\;0<\lambda\leq\frac{1}{3}.\end{cases}\eeas

\begin{figure}[H]
	\centering
	\begin{tikzpicture}
		\begin{axis}[
			width=10.5cm, height=6.5cm,
			xlabel={$\lambda$}, ylabel={$R_\lambda$},
			xmin=0, xmax=0.5, ymin=0, ymax=1,
			domain=0.001:0.5, samples=200,
			axis lines=left,
			axis line style={thick, black!70},
			tick label style={font=\small},
			label style={font=\small},
			grid=major, grid style={gray!12},
			minor grid style={gray!8},
			axis background/.style={fill=blue!2},
			legend pos=outer north east,
			legend style={
				font=\small,
				draw=gray!40,
				rounded corners,
				fill=white,
				fill opacity=0.95,
				text opacity=1,
				row sep=4pt,
			},
			legend cell align={left},
			]
			
			\addplot[
			name path=curveA,
			draw=none,
			domain=0.001:0.3333,
			] {sqrt(x/(1-2*x))};
			\addplot[
			name path=curveB,
			draw=none,
			domain=0.3333:0.5,
			] {(-2*(1-x)+sqrt(4+x-4*x^2))/(9-8*x)};
			\addplot[
			name path=axisbottom,
			draw=none,
			domain=0.001:0.5,
			] {0};
			
			\addplot[teal!15] fill between[of=curveA and axisbottom, soft clip={domain=0.001:0.3333}];
			\addplot[orange!12] fill between[of=curveB and axisbottom, soft clip={domain=0.3333:0.5}];
			
			\addplot[RoyalBlue, very thick, mark=none, domain=0.001:0.5]
			{(-2*(1-x)+sqrt(4+x-4*x^2))/(9-8*x)};
			\addlegendentry{$r_+(\lambda)$}
			
			\addplot[Maroon, ultra thick, densely dashed, mark=none, domain=0.001:0.3333]
			{sqrt(x/(1-2*x))};
			\addlegendentry{$\sqrt{\lambda/(1-2\lambda)}$}
			
			\addplot[
			only marks,
			mark=*,
			mark size=2.3pt,
			mark options={fill=ForestGreen, draw=black, line width=0.4pt}
			] coordinates {(0.3333, 0.5774)};
			\addlegendentry{Junction at $\lambda=\tfrac13$}
			
			\draw[ForestGreen!70!black, densely dotted, thick] (axis cs:0.3333,0) -- (axis cs:0.3333,1);
			\node[ForestGreen!70!black, font=\small, fill=white, fill opacity=0.85, text opacity=1, inner sep=1.5pt] 
			at (axis cs:0.3333,0.92) {$\lambda=\tfrac13$};
			
		\end{axis}
	\end{tikzpicture}
	\caption{The two competing bounds in Theorem \ref{t5}. The sharp radius $R_\lambda$ is their pointwise minimum: $R_\lambda=\sqrt{\lambda/(1-2\lambda)}$ on $(0,1/3]$ and $R_\lambda=r_+(\lambda)$ on $(1/3,1/2]$.}
	\label{fig:radius-t5}
\end{figure}

\end{proof}

\noindent\textbf{Conflict of interest:} The authors declare that there is no conflict  of interest regarding the publication of this paper.\vspace{1.2mm}

\noindent {\bf Funding:} Not Applicable.\vspace{1.2mm}

\noindent\textbf{Data availability statement:}  Data sharing not applicable to this article as no datasets were generated or analysed during the current study.\vspace{1.2mm}

\noindent {\bf Authors' contributions:} All the authors have equal contributions in preparation of the manuscript.

\end{document}